\pgfplotsset{compat=1.18}
\newcommand{\re}{\mathbb{R}}
\newcommand{\mR}{\mathbb{R}}
\newcommand{\mC}{\mathbb{C}}
\newcommand{\N}{\mathbb{N}}
\newcommand{\lmd}{\lambda}
\def\af{\alpha}
\def\rank{\mbox{rank}}
\newcommand{\sig}{\sigma}
\newcommand{\reff}[1]{(\ref{#1})}
\newcommand{\mc}[1]{\mathcal{#1}}
\newcommand{\qmod}[1]{\mbox{QM}[#1]}
\newcommand{\ideal}[1]{\mbox{Ideal}[#1]}
\newcommand{\cred}[1]{\textcolor{red}{#1}}
\newcommand{\st}{\mathit{s.t.}}
\newcommand{\bdes}{\begin{description}}
	\newcommand{\edes}{\end{description}}
\newcommand{\bal}{\begin{align}}
	\newcommand{\eal}{\end{align}}
\newcommand{\bnum}{\begin{enumerate}}
	\newcommand{\enum}{\end{enumerate}}
\newcommand{\bit}{\begin{itemize}}
	\newcommand{\eit}{\end{itemize}}
\newcommand{\bea}{\begin{eqnarray}}
	\newcommand{\eea}{\end{eqnarray}}
\newcommand{\be}{\begin{equation}}
	\newcommand{\ee}{\end{equation}}
\newcommand{\baray}{\begin{array}}
	\newcommand{\earay}{\end{array}}
\newcommand{\bsry}{\begin{subarray}}
	\newcommand{\esry}{\end{subarray}}
\newcommand{\bca}{\begin{cases}}
	\newcommand{\eca}{\end{cases}}
\newcommand{\bcen}{\begin{center}}
	\newcommand{\ecen}{\end{center}}
\newcommand{\bbm}{\begin{bmatrix}}
	\newcommand{\ebm}{\end{bmatrix}}
\newcommand{\bmx}{\begin{matrix}}
	\newcommand{\emx}{\end{matrix}}
\newcommand{\bpm}{\begin{pmatrix}}
	\newcommand{\epm}{\end{pmatrix}}
\newcommand{\btab}{\begin{tabular}}
	\newcommand{\etab}{\end{tabular}}
\theoremstyle{plain}
\newtheorem{theorem}{Theorem}[section]
\newtheorem{prop}[theorem]{Proposition}
\newtheorem*{claim*}{Claim}
\newtheorem{thm}[theorem]{Theorem}
\theoremstyle{definition}
\newtheorem{exa}[theorem]{Example}
\newtheorem{alg}[theorem]{Algorithm}
\newtheorem{remark}[theorem]{Remark}
\def\simplex{{\triangle^{m-1}}}
\numberwithin{equation}{section}
\numberwithin{table}{section}
\def\r{{\mathbb{R}}}
\def\mom_k{{\mathscr{P}_{d}(K)}}
\def\n{{\mathbb{N}}}
\def\nd{{\mathbb{N}_d^n}}
\def\n{{\mathbb{N}}}
\begin{document}

\title[Optimization over the weakly Pareto Set]
{Optimization over the weakly Pareto set and Multi-Task Learning}
\date{\today}

\author[Lei Huang]{Lei Huang}
\address{Lei Huang, Jiawang Nie, and Jiajia Wang,
Department of Mathematics, University of California San Diego,
9500 Gilman Drive, La Jolla, CA, USA, 92093.}
\email{leh010@ucsd.edu,njw@math.ucsd.edu,jiw133@ucsd.edu}

\author[Jiawang Nie]{Jiawang Nie}

\author[Jiajia Wang]{Jiajia Wang}

\subjclass{90C23, 90C29, 90C22.}
\keywords{Multi-objective, polynomial optimization, weakly Pareto point,
moment relaxation, Lagrange multiplier expression}

\begin{abstract}
We study the optimization problem over the weakly Pareto set of
a convex multiobjective optimization problem given by polynomial functions.
Using Lagrange multiplier expressions and the weight vector,
we give three types of representations for the weakly Pareto set.
Using these representations, we reformulate the optimization problem over the weakly Pareto set as a polynomial optimization problem. We then apply the Moment--SOS hierarchy to solve it and analyze its convergence properties under certain conditions.
Numerical experiments are provided to demonstrate the effectiveness of our methods.
Applications in multi-task learning are also presented.
\end{abstract}

\maketitle

\section{Introduction}
\label{introduction}

The multiobjective optimization problem (MOP) concerns how to optimize
several objective functions over a common feasible set.
A typical MOP can be formulated as
\be  \label{mop}
\left\{ \baray{rl}
\min & F(x)  \coloneqq  (f_1(x),\dots,f_m(x))  \\
\st &  c(x)  \coloneqq  (c_1(x),\, \ldots,c_l(x) ) \ge 0, \\
\earay \right.
\ee
where all $f_i(x),c_j(x)$ are functions in the decision variable
$x:=(x_1,\dots,x_n)$. Let $K$ denote the feasible set of \reff{mop}.
MOPs have broad applications in economics \cite{economics2},
finance \cite{finance}, machine learning \cites{machinelearning2},
and scenarios involving multiple tasks \cites{multitasksurvey,briefreview}.

Since the objectives $f_i$ may conflict each other, a decision vector $x$
that minimizes all $f_i$ simultaneously generally does not exist.
The concept of (weakly) Pareto optimal solutions
is commonly used to characterize optimal trade-off decision points
\cites{multi,survey2}.
A point $x^{\ast}\in K$ is called a {\it Pareto point} (PP) if there does not exist $x\in K$ such that $f_i(x)\le f_i(x^{\ast})$ for all $i=1,\dots,m$ and $f_j(x)<f_j(x^{\ast})$ for at least one $j$.
That is, at a Pareto point, it is impossible to improve any individual objective without worsening at least one of the others. A point $x^{\ast}\in K$ is called  a {\it weakly Pareto point}
(WPP) if there does not exist $x\in K$ such that $f_i(x)<f_i(x^{\ast})$ for all $i=1,\dots,m$.
That is, at a weakly Pareto point,
it is impossible to improve all objectives simultaneously. The set of all Pareto points (resp., weakly Pareto points) forms the Pareto set (resp., the weakly Pareto set).
Clearly, every Pareto point is a weakly Pareto point,
while the converse is not necessarily true. In computational practice, MOPs are often solved
by scalarization techniques, which convert the MOP into a single-objective optimization problem.
Typical scalarization techniques include linear scalarization \cite{lspcite},
the $\epsilon$-constraint method \cite{epsilon1}, Chebyshev scalarization \cite{cheby},
and boundary intersection methods \cite{boundary1}.
More scalarization methods can be found in the surveys \cites{multi2,multi4}.
For MOPs given by polynomials, there exist Moment-SOS relaxation methods;
see \cites{soscite2,soscite3,multisos1,guofeng,multi}.
We refer to \cites{existencets,existencets2} for results on the existence of weakly Pareto points.


There are infinitely many (weakly) Pareto points in general.
In some application scenarios,
decision-makers often need to select the {\it best}
solution among the set of all (weakly) Pareto points,
based on an additional preference function $f_0$.
This leads to the optimization problem over the weakly Pareto set (OWP):
\be  \label{oop}
\left\{ \baray{rl}
\min & f_0(x)  \\
\st &  x\in \mathcal{WP}, \\
\earay \right.
\ee
where $\mathcal{WP}$ denotes the weakly Pareto set of \reff{mop}, and
$f_0(x)$ is the preference function for weakly Pareto points.
Denote the optimal value of \reff{oop} by $f_{\min}$.
The problem \reff{oop} has important applications,
such as mean-variance portfolio optimization \cite{portfolio},
production planning \cite{production}, and multi-task learning \cite{review}. 
We remark that the Pareto set is generally not closed, 
whereas the weakly Pareto set is always closed 
when $f_i(x), c_j(x)$ are continuous functions \cite{closed}.
 For instance, consider the MOP \reff{mop} with two objective functions:
\[
f_1(x)=x^2-1, ~ f_2(x)=-x^2+2x,
\]
and the feasible set
$
K = \{x\in\r:\,0\le x\le 3\}.
$
The feasible point $x^{\ast}=2$ lies in the closure of the Pareto set 
but it is not a Pareto point since  $f_1(0)<f_1(2)$ and $f_2(0)=f_2(2).$

Since the set $\mathcal{WP}$ is typically hard to characterize \cite{survey1},
solving \reff{oop} is a challenging task. When the objective functions are strictly convex,
an unconstrained optimization problem of the form  \reff{oop} is investigated in \cite{ma}.
In \cite{jiang}, a gradient-based algorithm is given to
approximate the optimal value when the MOP is convex.
In \cite{julien}, necessary optimality conditions are studied
when the MOP is given by quadratic functions. In \cite{frombilevel},
the OWP is studied in the perspective of bilevel optimization.
We refer to \cites{ankur,mejia} for surveys of existing work on this topic.


\subsection*{Contributions}
This paper studies optimization over the weakly Pareto set in the form  of \reff{oop},
where the functions are given by polynomials.
The MOP \reff{mop} is said to be convex if each objective function $f_i(x)$ is convex
and each constraining function $c_i(x)$ is concave
(hence the feasible set $K$ must also be convex).
When \reff{mop} is convex,
every WPP $x^*\in \mR^n$ is a minimizer of the linear scalarization problem
\be  \nonumber
\left\{ \baray{rl}
\min & w_1f_1(x)+\dots + w_mf_m(x)  \\
\st &  	c_1(x)\ge 0\, \dots,c_l(x)\ge 0, \\
\earay \right.
\ee
for some nonnegative weight vector $w \coloneqq (w_1,\dots,w_m)\geq 0$ satisfying $\sum\limits_{j=1}^{m} w_j = 1$ (see Section \ref{section3}).
Under some suitable constraint qualifications, there exists a Lagrange multiplier vector
$\lambda \coloneqq (\lambda_1,\dots,\lambda_l)$ such that
\begin{equation}\nonumber
\left\{
\begin{array}{l}
\sum\limits_{j=1}^m w_j \nabla f_j(x) = \sum\limits_{i=1}^l \lambda_j\nabla c_i(x),\\
0 \le c_i(x)  \perp   \lambda_i\ge 0, \, i=1, \ldots, l.
\end{array}
\right.
\end{equation}
In the above, $c_i(x) \perp \lambda_i$ means the product $c_i(x)\cdot \lambda_i=0$.
Then, the set $\mathcal{WP}$ can be equivalently expressed as
\[
\mathcal{WP}=\left\{x\in\r^n\,\begin{array}{|l}
\exists \, (\lmd_1\dots,\lmd_l) \in \mR^{l}, \,
\exists \,  (w_1\dots,w_m) \in \mR^{m},  \\
\sum\limits_{j=1}^m w_j \nabla f_j(x) = \sum\limits_{i=1}^l \lambda_j\nabla c_i(x),\\
0 \le c_i(x)  \perp   \lambda_i\ge 0, \, i=1, \ldots, l, \\
w_1 \ge 0, \ldots, w_m \ge 0, ~   w_1 + \cdots + w_m = 1
\end{array}\right\} .
\]
Note that $\mathcal{WP}$ is the projection of a higher-dimensional set in $\mR^{n+m+l}$.
Using the above representation,  the OWP \reff{oop} can be recast as an optimization problem  in the decision variable $x$, the weight variable $w$ and the Lagrange multiplier variable $\lambda$.
The Moment-SOS hierarchy of semidefinite relaxations introduced by Lasserre \cite{158}
can be applied to solve it. However, the size of this hierarchy heavily depends
on the number of extra variables $w$ and $\lambda$,
which significantly increase the computational expense.
In this paper, we explore more computationally efficient ways to express the set $\mathcal{WP}$.
Specifically, we study three types of representations for the set $\mathcal{WP}$.

An interesting class of MOPs is given by the objectives such that
\begin{equation}\label{spe:MOP}
f_i(x)=h(x)+ d_i^Tx, \quad i=1,\ldots,m,
\end{equation}
where $h(x)$ is a common convex polynomial function, and the vectors
$d_j\in \mR^n$ are typically different. That is, the objectives $f_i$
only differ in linear terms. This kind of MOPs have
broad applications in multiobjective  linear programming \cite{bvpe},
portfolio optimization \cite{portfolio2},
minimizing energy consumption and costs in supply chain operations \cite{chain}.
For this class of MOPs, we can obtain highly efficient representations for the set $\mc{WP}$.

Our major contributions are:

\bit

\item We give efficient characterizations for the weakly Pareto set $\mc{WP}$.
Under different nonsingularity assumptions, we show how to express the weakly Pareto set
$\mc{WP}$ in terms of  $(x,w)$, or $(x,\lambda)$, or solely in $x$.
This leads to three types of representations for $\mc{WP}$.

\item Using the representations for $\mc{WP}$, we reformulate the OWP \reff{oop}
as a polynomial optimization problem
and apply the Moment-SOS hierarchy to solve it. Under some conditions,
we study how to extract optimizations for the OWP from this hierarchy.
Numerical experiments are given to demonstrate
the efficiency of our methods.

\item We show the applications of OWP in multi-task learning problems in machine learning.
Global optimizers for these problems can be obtained by
the reformulated polynomial optimization problem.

\eit

%
%

The paper is organized as follows. Section \ref{section2} reviews some basics in polynomial
optimization.  In Section \ref{section3},
we give three types of representations for the weakly Pareto set
$\mc{WP}$ when the MOP \reff{mop} is convex.
Section \ref{section4} discusses how to apply the Moment-SOS hierarchy to solve the OWP \reff{oop}.
Some numerical examples for the OWP are given in Section \ref{section5}.
Section \ref{section6} presents the applications in multi-task learning.
Some conclusions and discussions are made in Section~\ref{sec:dis}.

\section{Preliminaries}\label{section2}
\label{preliminaries}

\subsection*{Notation}

The symbol $\n$ (resp., $\r$, $\mC$) denotes the set of nonnegative integers
(resp., real numbers, complex).  The notation $e_i$ denotes the
$i$th unit vector, which has 1 in the
$i$th entry and 0 in all other entries.
The $e$ denotes the vector of all ones.
For two scalars $a,b$, the notation $a\perp b$ means that $a\cdot b=0$. The notation $\|x\|$ denotes the  2-norm of the vector $x$ and the notation $\|A\|_F$ denotes the Frobenious norm of the matrix $A$.
For $t \in \mathbb{R}$, $\lceil t \rceil$
denotes the smallest integer greater than or equal to $t$.
A symmetric matrix $X \succeq 0$  if $X$ is positive semidefinite.
For a smooth function $f(x)$,
denote by $\nabla f(x)$ its gradient with respect to $x$.
In particular, $\nabla_{x_k} f(x)$ denotes the partial derivative with respect to the variable $x_k$.

Let $\r[x]$ denote the polynomial ring in  $x$, and let $\r[x]_k$
denote the subring of $\r[x]$ consisting of polynomials with degree at most  $k$.
Denote by $\deg(p)$  the total degree of the polynomial $p$.
For a positive integer $l$, the notation $[l]$ represents the set $\{1,\dots,l\}$,
and the notation $I_l$ denotes the $l\times l$ identity matrix.
For $x = (x_1,\dots,x_n)$ and $\af = (\af_1, \ldots, \af_n)$, denote
\[
x^{\alpha} \coloneqq x_1^{\alpha_{1}}\cdots x_n^{\alpha_{n}}, \quad
|\alpha| \coloneqq  \alpha_{1}+\cdots+\alpha_{n}.
\]
The power set of degree $d$ is
$$
\mathbb{N}_d^n:=\left\{\alpha \in \mathbb{N}^n: \,\,|\alpha| \leq d\right\}.
$$


\subsection{Some basics in polynomial optimization}
In this subsection, we review some basics of polynomial optimization and moment theory.
For more details, we refer the reader to \cites{sos,nie2023moment,matrix,las2,marshall}.
A polynomial $\sig \in \mathbb{R}[x]$ is said to be a sum of squares $(\mathrm{SOS})$
if $\sig =p_{1}^{2}+\cdots+p_{k}^{2}$ for some $p_{1}, \ldots, p_{k} \in \mathbb{R}[x].$
The cone of all SOS polynomials is denoted as $\Sigma[x].$
For a given degree $k\in\n$,
we denote the $k$th truncation of $\Sigma[x]$ by
\[
\Sigma[x]_{k} \coloneqq  \Sigma[x] \cap \mathbb{R}[x]_{k}.
\]
A subset $I$ of $\r[x]$ is an ideal if $I\cdot \r[x]\subseteq I$  and $I+I\subseteq I$. For a polynomial tuple $h =  (h_{1}, \ldots, h_{m_1} )$,
the ideal generated by $h$ is defined as
\[
\ideal{h} \coloneqq  h_{1} \cdot \mathbb{R}[x]+\cdots+h_{m_1} \cdot \mathbb{R}[x].
\]
The $k$th degree truncation of $\ideal{h}$ is
\begin{equation}
\ideal{h}_{k} \coloneqq h_{1} \cdot \mathbb{R}[x]_{k-\deg\left(h_{1}\right)}+
   \cdots+h_{m_1} \cdot \mathbb{R}[x]_{k-\deg\left(h_{m_1}\right)}.
\end{equation}
For a polynomial tuple $g \coloneqq \left(g_{1}, \ldots, g_{m_2}\right)$,
the quadratic module generated by $g$ is
\begin{equation*}
\qmod{g} \coloneqq \Sigma[x]+g_{1} \cdot \Sigma[x]+\cdots+g_{m_2} \cdot \Sigma[x].
\end{equation*}
Similarly, the $k$th degree truncation of $\qmod{g}$ is
\begin{equation}
\qmod{g}_k \coloneqq \Sigma[x]_{k}+g_{1} \cdot \Sigma[x]_{k-\deg(g_1)}+\cdots+g_{m_2} \cdot \Sigma[x]_{k-\deg(g_{m_2})}.
\end{equation}
The set $\ideal{h}+\qmod{g}$ is said to be  Archimedean if there exists $R>0$ such that $R-\|x\|^2\in \ideal{h}+\qmod{g}.$

For a given degree $d$, let $\re^{ \N^n_{d} }$ denote the set of all real vectors $y$ labeled by $\af \in \N^n_{d}$. Each
$y \in \re^{ \N^n_{d} }$ can be represented as
$
y \, = \, (y_\af)_{ \af \in \N^n_{d} },
$
and it is called a
{\it truncated multi-sequence} (tms) of degree $d$.
A tms $y \in \mathbb{R}^{\nd}$  defines a bilinear operation on $ \mathbb{R}[x]_{d}$ as follows:
\begin{equation}
\langle \sum_{\alpha\in\nd} p_{\alpha}x^{\alpha}, y \rangle =
\sum_{\alpha\in\nd} p_{\alpha}y_{\alpha}.
\end{equation}
For a polynomial $q \in \mathbb{R}[x]_{2k}$ and  $y \in \mR^{\mathbb{N}^n_{2k}}$,
the $k$th order localizing matrix of $q$ generated by $y$ is the symmetric matrix
${L}_{q}^{(k)}[y]$ satisfying
\begin{equation} \label{exp:sdr}
	\left\langle q \cdot p^2, y\right\rangle=\operatorname{vec}
\left(p\right)^{T}\left({L}_{q}^{(k)}[y]\right)
\operatorname{vec}\left(p\right)\quad \forall \,
p \in \mathbb{R}[x]_{k-\lceil \deg(q)/2\rceil} .
\end{equation}
Here, $\operatorname{vec}\left(p\right)$ denotes the coefficient vector of $p$
in graded lexicographical order. In particular, when $q=1$,
the localizing matrix ${L}_{q}^{(k)}[y]$ reduces to the
$k$th order moment matrix, for which we denote by $M_k[y]$. For instance, when $n=2$, the  moment matrix $M_2[y]$ is
\[
M_2[y]=\begin{bmatrix}
    y_{00}&y_{10}&y_{01}&y_{20}&y_{11}&y_{02}\\
    y_{10}&y_{20}&y_{11}&y_{30}&y_{21}&y_{12}\\
    y_{01}&y_{11}&y_{02}&y_{21}&y_{12}&y_{03}\\
    y_{20}&y_{30}&y_{21}&y_{40}&y_{31}&y_{22}\\
    y_{11}&y_{21}&y_{12}&y_{31}&y_{22}&y_{13}\\
    y_{02}&y_{12}&y_{03}&y_{22}&y_{13}&y_{04}
\end{bmatrix},
\]
and the  localizing matrix $L^{(2)}_{q}[y]$ for  $q=x_1^2-x_2$ is
\[
L^{(2)}_{x_1^2-x_2}[y]=\begin{bmatrix}
    y_{20}-y_{01}&y_{30}-y_{11}&y_{21}-y_{02}\\
    y_{30}-y_{11}&y_{40}-y_{21}&y_{31}-y_{22}\\
    y_{21}-y_{02}&y_{31}-y_{12}&y_{22}-y_{03}
\end{bmatrix}.
\]

\subsection{The Moment-SOS hierarchy for polynomial optimization}
\label{subsection 2.3}

In this subsection, we introduce  the Moment-SOS hierarchy of semidefinite relaxations for solving polynomial optimization; see \cites{158,sos,generalized} for more details. Consider the polynomial optimization problem:
\begin{equation}
    \label{standard polynomial problem}
    \left\{
    \begin{array}{cl}
        \min & f(x) \\
        \st &  c_i(x)=0\,(i\in \mathcal{I}),
        \\ &c_i(x)\ge 0\,(i\in\mathcal{J}),
    \end{array}
    \right.
\end{equation}
where  $f, c_i, c_j\in \r[x]$, and $\mathcal{I}$,  $\mathcal{J}$ are the index sets of equality and inequality constraints, respectively.
For a relaxation order $k$, the  $k$th order SOS relaxation for solving
\reff{standard polynomial problem} is
\begin{equation}
\label{sosgeneral}
\left\{
\begin{array}{cl}
\max   & \gamma \\
\st & f-\gamma \in \ideal{(c_i)_{i\in\mathcal{I}}}_{2k}+ \qmod{(c_i)_{i\in\mathcal{J}}}_{2k}.
\end{array}
\right.
\end{equation}
The dual optimization of \reff{sosgeneral} is the $k$th order moment relaxation
\begin{equation}
\label{momgeneral}
\left\{
\begin{array}{cl}
\min & \langle f,y\rangle \\
\st &\langle 1,y\rangle=1,\\
& L_{c_{j}}^{(k)}[y]=0 \, (j\in \mathcal{I}),\\
& L_{c_j}^{(k)}[y]\succeq 0\, (i\in\mathcal{J}),\\
& M_k[y]\succeq 0,\,y\in \mR^{n}_{2k}.    \\
\end{array}
\right.
\end{equation}
For $k=1,2,\dots$, the sequence of primal-dual  relaxation pairs \reff{sosgeneral}-\reff{momgeneral}
is referred to as the Moment-SOS hierarchy. Under the Archimedean property,
it was shown in \cite{158} that this hierarchy has asymptotic convergence.
For results on finite convergence, we refer to
\cites{huang2023,hny,hny2,niecondition}.

\section{Representations for weakly Pareto points}
\label{section3}

In this section, we characterize the weakly Pareto set $\mc{WP}$ for convex MOPs.
It can be expressed through the Lagrange multiplier vector $\lambda$
and the weight vector $w$ for the linear scalarization optimization.


MOPs are often solved using linear scalarization, which optimizes
a nonnegative linear combination of individual objectives \cites{multi,multi2, multi4}.
Denote the $(m-1)$-dimensional simplex of vectors $w:=(w_1,\dots,w_m)$:
\[
\Delta^{m-1}  \coloneqq \{w \in\r^m:\,
w \ge 0, \,  w_1+\dots+w_m=1   \}.
\]
For a weight vector $w\in \Delta^{m-1}$, the linear scalarization problem (LSP) for \reff{mop} is
\be  \label{lsp}
\left\{ \baray{rl}
\min & f_w(x)  \coloneqq   w_1f_1(x)+\dots + w_mf_m(x)  \\
\st &  	c_1(x)\ge 0\, \dots,c_l(x)\ge 0. \\
\earay \right.
\ee
Clearly, every minimizer $x^*$ of \reff{lsp} is a weakly Pareto point of  \reff{mop}
for each $w\in \Delta^{m-1}$, and $x^*$ is a Pareto point if $w>0$.
By choosing different weight vectors $w$, we may obtain different weakly Pareto points.
When \reff{mop} is nonconvex, not every WPP is the minimizer of a scalarization problem.
However, if the MOP \reff{mop} is convex, this is true, which is well-known in multiobjective optimization \cite{review,vectoropt}. Below, we summarize these results and provide direct proofs for the convenience of the reader.

\begin{thm} \label{theorem1}
Suppose that the MOP \reff{mop} is convex.
Then, we have

\bit
\item[(i)] A point $x^* \in K$ is a weakly Pareto point of \reff{mop}
if and only if $x^*$ is a minimizer of the LSP \reff{lsp}
for some weight vector $w\in \Delta^{m-1}$.

\item[(ii)] If every objective $f_i$ is strictly convex, every weakly Pareto point of \reff{mop} is a Pareto point.

\eit
\end{thm}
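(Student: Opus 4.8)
The plan is to treat the two statements separately, with part (i) carried by a separating-hyperplane argument and part (ii) by an elementary strict-convexity estimate. For part (i), the ``if'' direction requires no convexity: if $x^*$ minimizes $f_w$ for some $w \in \Delta^{m-1}$ and some $x \in K$ satisfied $f_i(x) < f_i(x^*)$ for all $i$, then since $w \ge 0$ with $\sum_i w_i = 1$ (so some $w_i > 0$) we would get $f_w(x) < f_w(x^*)$, contradicting minimality; hence $x^*$ is a weakly Pareto point. The substance is the ``only if'' direction, which is where convexity enters.

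For the ``only if'' direction, I would introduce the set
\[
\mathcal{S} \coloneqq \{ z \in \r^m : \exists\, x \in K \text{ s.t. } f_i(x) \le z_i \text{ for all } i \in [m] \}.
\]
Convexity of $\mathcal{S}$ follows directly from convexity of each $f_i$ and of $K$: given two points of $\mathcal{S}$ with witnesses $x, x'$, the convex combination $tx + (1-t)x' \in K$ witnesses the corresponding convex combination of the two points. Writing $a \coloneqq (f_1(x^*), \dots, f_m(x^*))$, the weakly Pareto property of $x^*$ says exactly that $\mathcal{S}$ is disjoint from the open convex set $a - \mathrm{int}(\r^m_+) = \{ z : z_i < a_i \ \forall i \}$. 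Applying the separating hyperplane theorem to these two disjoint nonempty convex sets yields a nonzero $w \in \r^m$ and $\beta$ with $w^T s \ge \beta \ge w^T u$ for all $s \in \mathcal{S}$ and $u \in a - \mathrm{int}(\r^m_+)$. I would then read off the sign and normalization of $w$: letting a point of the open set tend to $-\infty$ in a single coordinate forces $w \ge 0$ (otherwise the functional is unbounded above there), and since $a \in \mathcal{S}$ sits at the corner of the open set, $\sup_{u} w^T u = w^T a$, so the separation specializes to $w^T f(x) \ge w^T a = f_w(x^*)$ for every $x \in K$; that is, $x^*$ minimizes $f_w$. Dividing $w$ by $\sum_i w_i > 0$ places it in $\Delta^{m-1}$.

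Part (ii) I would prove directly, without invoking (i). Suppose $x^*$ is a weakly Pareto point that fails to be Pareto, so there is $x \in K$ with $f_i(x) \le f_i(x^*)$ for all $i$ and $f_j(x) < f_j(x^*)$ for some $j$; the strict inequality forces $x \ne x^*$. Consider the midpoint $\bar x \coloneqq \tfrac12(x + x^*) \in K$. Strict convexity gives $f_i(\bar x) < \tfrac12 f_i(x) + \tfrac12 f_i(x^*) \le f_i(x^*)$ for every $i$, so $\bar x$ strictly improves all objectives over $x^*$, contradicting that $x^*$ is weakly Pareto. Hence $x^*$ must be a Pareto point.

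The main obstacle is the ``only if'' half of part (i): setting up the right convex set $\mathcal{S}$ and verifying that the weakly Pareto condition is precisely disjointness from a translate of the open negative orthant, so that the separating hyperplane theorem applies and the separating vector can be shown to be nonnegative, nonzero, and normalizable into $\Delta^{m-1}$. Everything else---the ``if'' direction and part (ii)---is routine once convexity (respectively strict convexity) is used in the standard midpoint estimate.
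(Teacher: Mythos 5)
Your proof is correct and follows essentially the same route as the paper: the same auxiliary convex set (your $\mathcal{S}$ is exactly the paper's $\mathcal{U}$), a hyperplane separation argument producing a nonnegative, normalizable weight vector, and the identical strict-convexity midpoint contradiction for part (ii). The only cosmetic difference is that you separate $\mathcal{S}$ from the disjoint open set $a - \mathrm{int}(\mathbb{R}^m_+)$, whereas the paper supports $\mathcal{U}$ at the boundary point $F(x^*)$; this makes your write-up slightly more self-contained (and you also spell out the ``if'' direction the paper omits), but the underlying idea is the same.
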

\begin{proof}
(i) The ``if" part is obvious, we omit the proof for cleanness. For the ``only if" direction, let
\[
\mathcal{U}   \coloneqq  \{u=(u_1,\dots,u_m)\in \mR^m: u_i\ge f_i(x) \text{ for some }x\in K\}.
\]
One can see that $\mathcal{U}$ is a convex set in $\mR^m$ since $f_i$ is convex.
Let $x^* \in K$ be a WPP of \reff{mop}.
Then, the vector $F(x^*)=(f_1(x^*),\dots,f_m(x^*))$ must lie on the boundary of $\mathcal{U}$.
This is because if $F(x^*)$ were an interior point of  $\mathcal{U}$,
there would exist
 $x\in K$ and $v = (v_1, \ldots, v_m) \in \mR^m$ satisfying $v_i > 0~(i\in [m])$
such that $f_i(x)+v_i < f_i(x^*)$ for all $i\in [m]$. This contradicts the assumption that  $x^*$ is a WPP.
By the hyperplane separation theorem, there exists a nonzero vector $w^* \in \mR^m$ such that
\[
\langle w^*,F(x^*)\rangle \le \langle w^*, u \rangle\text{~~for all~~} u\in\mathcal{U}.
\]
By construction of $\mc{U}$, the vector $w^*$ must be nonnegative.
Up to a positive scaling, we can assume $w^*_1+\cdots+w^*_m=1$.
The above relation implies that
\[
\langle w^*,F(x^*)\rangle \le \langle w^*, F(x) \rangle \quad \text{for all} \quad  x\in K .
\]
This means that $x^*$ is a minimizer of \reff{lsp} for the weight vector $w^*$.

(ii) Suppose, on the contrary, that $x^*\in \mR^n$ is a weakly Pareto point but not a Pareto point.
Then, there exists a point $v\in K$ such that $f_i(v)\le f_i(x^*)$ for all $i\in [m]$
and $f_j(v)<f_j(x^*)$ for some $j$. Clearly, $v \ne x^*$.
Since each $f_i$ is strictly convex and $K$ is convex,  it follows that
$\lambda x^*+(1-\lambda )v\in K$ for all $\lambda \in (0,1)$ and
\[
f_i(\lambda x^*+(1-\lambda )v)<\lambda f_i(x^*)+(1-\lambda) f_i(v) \le f_i(x^*),
\quad i=1,\ldots, m.
\]
This contradicts the assumption that $x^*$ is a weakly Pareto point.
\end{proof}

\begin{thm}
Suppose the functions $f_1,\dots,f_m,c_1,\dots,c_l$ as in the MOP \reff{mop} are continuous, then the weakly Pareto set $\mathcal{WP}$ is closed.
\end{thm}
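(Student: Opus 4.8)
The plan is to establish closedness via the sequential criterion: I would take an arbitrary sequence $\{x^{(k)}\}\subseteq \mathcal{WP}$ with $x^{(k)}\to x^*$ and show that the limit $x^*$ again lies in $\mathcal{WP}$. The first step is to confirm that $x^*$ is feasible. Since each $c_j$ is continuous, the superlevel set $\{x:\,c_j(x)\ge 0\}$ is closed, so the feasible set $K=\bigcap_{j=1}^l\{x:\,c_j(x)\ge0\}$ is closed as a finite intersection of closed sets. As every $x^{(k)}\in\mathcal{WP}\subseteq K$, the limit satisfies $x^*\in K$.

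The core step is a contradiction argument. Suppose $x^*$ is \emph{not} a weakly Pareto point. By definition there exists $y\in K$ with $f_i(y)<f_i(x^*)$ for all $i\in[m]$. Set
\[
\veps \coloneqq \min_{1\le i\le m}\bigl(f_i(x^*)-f_i(y)\bigr)>0,
\]
which is strictly positive because it is a minimum of finitely many positive numbers. The idea is that this uniform gap $\veps$ can be transported from $x^*$ to the nearby points $x^{(k)}$ using continuity.

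Then I would invoke continuity of the finitely many objectives $f_1,\dots,f_m$: since $x^{(k)}\to x^*$, we have $f_i(x^{(k)})\to f_i(x^*)$ for each $i$, so there is an index $N$ such that $f_i(x^{(k)})>f_i(x^*)-\veps/2$ holds simultaneously for all $i\in[m]$ whenever $k\ge N$. For such $k$, the choice of $\veps$ gives $f_i(x^{(k)})>f_i(x^*)-\veps/2\ge f_i(y)+\veps/2>f_i(y)$ for every $i$. Thus $y\in K$ satisfies $f_i(y)<f_i(x^{(k)})$ for all $i$, which shows $x^{(k)}$ is not a weakly Pareto point, contradicting $x^{(k)}\in\mathcal{WP}$. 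Hence $x^*\in\mathcal{WP}$, and since the convergent sequence was arbitrary, $\mathcal{WP}$ is closed.

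I do not expect a genuine obstacle here; the only point requiring care is that the strict inequalities $f_i(y)<f_i(x^*)$ must be preserved \emph{simultaneously} for all $m$ objectives when passing to $x^{(k)}$. This is exactly what the uniform gap $\veps$ (a minimum over the finite index set $[m]$) together with the continuity of each $f_i$ guarantees, so the finiteness of the number of objectives is what makes the argument go through cleanly.
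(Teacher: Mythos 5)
Your proof is correct and takes essentially the same approach as the paper's: a sequential contradiction argument in which a point $y\in K$ strictly dominating the limit $x^*$ is shown, via continuity of the finitely many objectives, to also strictly dominate $x^{(k)}$ for all large $k$, contradicting $x^{(k)}\in\mathcal{WP}$. Your write-up is in fact slightly more careful than the paper's, since you explicitly verify $x^*\in K$ (a step the paper's proof tacitly assumes when negating weak Pareto optimality) and you quantify the uniform gap $\varepsilon$ rather than asserting the eventual inequalities directly, but these are refinements of the same argument rather than a different route.
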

\begin{proof}
Suppose  there exists a sequence of weakly Pareto points $\{x_k\}_{k=1}^{\infty}\in \mathcal{WP}$ such that $x_k \rightarrow x^{\ast}$. If $x^{\ast}\notin \mathcal{WP}$, there exists $x'\in K$ such that $f_i(x') < f_i(x^{\ast})$ for all $i\in[m]$. Since $x_k\rightarrow x^{\ast}$ and  $f_i$ is continuous,  we have $\lim\limits_{k\rightarrow \infty}f_i(x_k)= f_i(x^{\ast})$ for $i\in[m]$.
It holds that $f_i(x_k)>f_i(x')$ for all $i$ for $k$ sufficiently large, which is a contradiction.
\end{proof}

\hspace{1cm}

In the following, we give three types of representations for
the weakly Pareto set $\mathcal{WP}$, based on Theorem~\ref{theorem1}.

\subsection{Representation of the set $\mathcal{WP}$ in terms of $x,w$}
\label{rep1}

In this subsection, we show how to express the Lagrange multiplier vector
$\lambda$ in terms of $x$ and $w$.
When the MOP \reff{mop} is convex, it follows from Theorem \ref{theorem1} that
a point $x \in K$ is a WPP if and only if $x$ is a minimizer of the LSP \reff{lsp},
for some weight vector $w\in \Delta^{m-1}$. Under certain constraint qualifications
(e.g., linear independence constraint qualification, Slater's condition),
there exists a Lagrange multiplier vector
$\lambda \coloneqq (\lambda_1,\dots,\lambda_l) \in \mR^l$ such that
\begin{equation}\label{sec3:kkt1}
\left\{
\begin{array}{l}
\sum\limits_{j=1}^m w_j\nabla f_j(x) = \sum\limits_{i=1}^l \lambda_i\nabla c_i(x),\\
 0 \le c_i(x) \perp \lambda_i\ge 0, \, i = 1,\ldots, l.
\end{array}
\right.
\end{equation}
When the MOP \reff{mop} is convex, every point $x\in \mR^{n}$  satisfying
\reff{sec3:kkt1} is aslo a minimizer of \reff{lsp}.
Thus, the weakly Pareto set $\mathcal{WP}$ can be described as
\[
\mathcal{WP}=\left\{x\in\r^n\,\begin{array}{|l}
\exists\, \lambda:=(\lambda_1,\dots,\lambda_l)\in\r^l, \\
\exists\, w \coloneqq (w_1,\dots,w_m)\in \Delta^{m-1},   \\
\sum\limits_{j=1}^{m} w_j \nabla f_j(x) =\sum\limits_{i=1}^{l}\lambda_i c_i(x),\\
  0 \le c_i(x) \perp \lambda_i\ge 0, \, i = 1,\ldots, l, \\
\end{array}\right\}.
\]
The above representation for $\mc{WP}$ requires to use extra variables $\lambda$ and $w$.
Interestingly, the Lagrange multiplier vector $\lambda$ can be eliminated for general cases.
The equation \reff{sec3:kkt1} implies that
\begin{equation}
 \label{sec3:def:C}
\underbrace{\left[\begin{array}{cccc}
\nabla c_{1}(x) & \nabla c_{2}(x) & \cdots & \nabla c_{l}(x) \\
c_{1}(x) & 0 & \cdots & 0 \\
0 & c_{2}(x) & \cdots & 0 \\
\vdots & \vdots & \ddots & \vdots \\
0 & 0 & \cdots & c_{l}(x)
\end{array}\right]}_{C(x)} \underbrace{\left[\begin{array}{c}
\lambda_{1} \\
\lambda_{2} \\
\vdots \\
\lambda_{l}
\end{array}\right]}_{\lambda}=\underbrace{\left[\begin{array}{c}
\sum\limits_{j=1}^{m} w_j \nabla f_j(x) \\
0 \\
\vdots \\
0
\end{array}\right]}_{\hat{f}_{w}(x)}
\end{equation}

The polynomial tuple $c = (c_1,\dots,c_l)$ is said to be {\it nonsingular} if
$\operatorname{rank}C(x)=l$ for all $x\in\mathbb{C}^n$. When $c$ is nonsingular,
there exists a matrix polynomial $C^{\prime}(x)\in \mR[x]^{l\times (n+l)}$ such that
$C^{\prime}(x)C(x) = I_l$ (see \cite{nietight}*{Proposition~5.1}, then
\begin{equation}
\label{ref:L1}
\lambda  =  \lmd(x, w) \coloneqq  C^{\prime}(x)\hat{f}_{w}(x)
= \sum\limits_{j=1}^{m} w_j C^{\prime}_1(x)\nabla f_j(x),
\end{equation}
where $C^{\prime}_1(x)$ is the submatrix consisting of its first $n$ columns.
The $i$th entry of $\lmd(x, w)$ is denoted as $\lmd_i(x, w)$, i.e.,
\be \label{expr:lmd=(x,w)}
\lmd_i(x, w)  =  \sum\limits_{j=1}^{m} w_j e_i^T C^{\prime}_1(x)\nabla f_j(x),
\ee
\[
\lambda(x,w) \, = \, (\lambda_1(x,w),\dots,\lambda_l(x,w)).
\]
The weakly Pareto set $\mathcal{WP}$ can be represented in terms of $(x,w)$ as
\be  \label{rep:wpp}
\mathcal{WP}=\left\{x\in\r^n\,
\begin{array}{|l}
\exists\, w:= ( w_1,\dots,w_m)\in \r^m  ,   \\
\sum\limits_{j=1}^{m} w_j \nabla f_j(x) = \sum\limits_{i=1}^{l}\lambda_i(x,w)\nabla c_i(x),  \\
   0 \le c_i(x) \perp \lambda_i (x, w) \ge 0, \, i = 1,\ldots, l,\\
   w_1 \ge 0, \ldots,  w_m \ge 0, \,  w_1 + \cdots + w_m  =  1  .   \\
\end{array}\right\}.
\ee

\begin{exa}
\label{cube ref}
(i) Suppose the feasible set $K$ is  the $n$-dimensional hypercube
$$
K = \{x\in\r^n: a_1^2-x_1^2\ge 0,\dots, a_n^2-x_n^2\ge 0\},
$$
for a real vector $a = (a_1, \ldots, a_n)>0$.
One can check that $C^{\prime}(x)C(x)=I_n$ for
\[
C^{\prime}(x) =
\begin{bmatrix}
-\frac{x_1}{2a_1^2}&0&\dots&0&\frac{1}{a_1^2}&0&\dots&0\\
0& -\frac{x_2}{2a_2^2}&\dots&0&0&\frac{1}{a_2^2}&\dots&0\\
\vdots&\vdots&\vdots&\vdots&\vdots&\vdots&\vdots&\vdots\\
0&0&\dots&-\frac{x_n}{2a_n^2}&0&0&\dots&\frac{1}{a_n^2}
\end{bmatrix} .
\]
The polynomial expressions for $\lambda_i$ are
\[
\lambda_i(w,x) = -\frac{x_i}{2a_i^2}(w_1\frac{\partial f_1}{\partial x_i}+
\cdots+w_m\frac{\partial f_m}{\partial x_i}), \quad i=1,\ldots, n.
\]
(ii) Suppose the feasible set $K$  is defined by linear inequalities, i.e.,
\[
K  = \{x\in \r^n:\,a_i^Tx-b_i\ge 0,\, i = 1,\ldots, l \},
\]
where $a_i\in\r^n$ and $b_i\in \r.$
If the vectors $a_i,\dots,a_l$ are linearly independent,
the matrix $C^{\prime}_1(x)$ as in \reff{ref:L1} is given as
\[
C^{\prime}_1(x)=(AA^T)^{-1}A,
\]
where $A = \begin{bmatrix}a_1&\dots &a_l\end{bmatrix}.$  \\
(iii) Suppose the feasible set $K$  is given as
\[
K=\left\{x\in\r^n:\,\alpha_ix_i+q_i(x_{i+1},\dots,x_n)\ge 0,\, i=1,\dots, l\right\},
\]
where $0\ne\alpha_i\in\r$, $q_i\in \mR[x_{i+1},\dots,x_n]$.
Note that the matrix $T(x)$, consisting of the first $l$ rows of
$\,[\nabla c_1(x),\dots,\nabla c_l(x)]$, is an invertible
lower triangular matrix with constant diagonal entries. Hence, we have
\[
\lambda(x,w) = T(x)^{-1} \Big( \sum\limits_{j=1}^m w_j \nabla f_j(x) \Big)_{1:l}.
\]
Here, the subscript $1:l$ denotes the subvector
consisting of the entries indexed from $1$ through $l$.
\end{exa}

\subsection{Representation of the set $\mathcal{WP}$ in terms of $x,\lambda$}
\label{rep2}

In this subsection, we show how to represent the weakly Pareto set $\mathcal{WP}$
in terms of $x$ and $\lambda$. The equation \reff{sec3:kkt1} implies that
\begin{equation}
 \label{sec3:def:D}
\underbrace{\left[\begin{array}{cccc}
\nabla  f_{1}(x) & \nabla f_{2}(x) & \cdots & \nabla f_{m}(x) \\
1 & 1 & \cdots & 1 \\
\end{array}\right]}_{Q(x)} \underbrace{\left[\begin{array}{c}
w_{1} \\
w_{2} \\
\vdots \\
w_{m}
\end{array}\right]}_{w}=\underbrace{\left[\begin{array}{c}
\sum\limits_{i=1}^{l} \lambda_i \nabla c_i(x) \\
1 \\
\end{array}\right]}_{\hat{c}_{\lambda}(x)} .
\end{equation}
The matrix $Q(x)$ defined  above is a $(n+1)$-by-$m$ polynomial matrix.
If $Q(x)$ is nonsingular (i.e., $\rank \, Q(x) = m$ for all $x \in \mC^n$),
then there exists a  polynomial matrix $Q^{\prime}(x)\in \mR[x]^{m\times (n+1)}$ such that
$Q^{\prime}(x)Q(x) = I_{m}$, so
\be
w  = w(x, \lmd)  \coloneqq  Q^{\prime}(x)\hat{c}_{\lambda}(x)
= \sum\limits_{i=1}^{l} \lambda_i Q^{\prime}_1(x)\nabla c_i(x)+Q^{\prime}_2(x) .
\ee
In the above, $Q^{\prime}_1(x)$ is the submatrix of $Q^{\prime}(x)$
consisting of its first $n$ columns and $Q^{\prime}_2(x)$
is the $(n+1)$-th column of $Q^{\prime}(x)$.
The polynomial vector
\be \label{expr:w(x,lmd)}
w(x,\lambda) = (w_1(x,\lambda), \ldots, w_m(x,\lambda))
\ee
is an expression for the weight vector $w$ in terms of $x$ and $\lambda$,
where $w_j(x,\lambda)$ denotes the $j$th entry of $w(x,\lmd)$.
The weakly Pareto set $\mathcal{WP}$ can be equivalently given as
\begin{equation}\label{rep:wpp2}
\mathcal{WP}=\left\{x\in\r^n\,\begin{array}{|l}
\exists\, \lambda:=(\lambda_1,\dots,\lambda_l)\in\r^l ,    \\
   \sum\limits_{j=1}^{m} w_j(x,\lambda) \nabla f_j(x) =\sum\limits_{i=1}^{l}\lambda_i\nabla c_i(x),\\
    0 \le c_i(x) \perp \lambda_i \ge 0, \, i = 1,\ldots, l, \\
    w_1(x,\lambda) \ge 0, \ldots, w_m(x,\lambda) \ge 0,   \\
    w_1(x,\lambda) + \cdots + w_m(x,\lambda) =  1  \\
\end{array}\right\}.
\end{equation}

An interesting class of MOPs is one in which the objectives are given in the form
\begin{equation}\label{spe:MOP_new}
f_i(x)=h(x)+ d_i^Tx , \quad  i=1,\ldots,m ,
\end{equation}
where $h(x)$ is a common convex polynomial function and $d_1,\dots,d_m\in \mR^n$.
The objectives only differ in linear terms.
The expression \reff{rep:wpp2} can be further simplified for this kind of MOPs.

\begin{prop} \label{thm33}
For the MOP \reff{mop}, suppose the objectives are given in \reff{spe:MOP_new}
and the vectors $d_1,\dots,d_m$ are linearly independent.
Then, for each $x \in \mc{WP}$, the weight vector $w$ can be expressed as
\be  \label{rep:w}
w(x, \lmd) \ = \  (D^TD)^{-1}D^T \Big( \hat{h}_1(x)  + \hat{h}_2(x, \lambda)  \Big),
\ee
where
\be   \label{sec3:def:D2}
\baray{l}
D = \left[\begin{array}{cccc}
 d_{1} & d_{2} & \cdots &  d_{m} \\
1 & 1 & \cdots & 1 \\ \end{array}\right],  \\
\hat{h}_1(x) = \begin{bmatrix}-\nabla h(x)\\1\end{bmatrix}, \,
\hat{h}_2(x, \lambda) =
\begin{bmatrix}\sum\limits_{i=1}^l \lambda_i\nabla c_i(x)\\0\end{bmatrix}.
\earay
\ee
\end{prop}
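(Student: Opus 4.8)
The plan is to reduce the linear relation \reff{sec3:def:D}, which reads $Q(x)w = \hat{c}_{\lambda}(x)$, to a \emph{constant}-coefficient linear system by exploiting the structure $f_i(x) = h(x) + d_i^T x$. Since $\nabla f_i(x) = \nabla h(x) + d_i$, the $j$th column of $Q(x)$ splits as $\begin{bmatrix}\nabla h(x)+d_j\\ 1\end{bmatrix} = \begin{bmatrix}\nabla h(x)\\ 0\end{bmatrix} + \begin{bmatrix}d_j\\ 1\end{bmatrix}$. Setting $g(x) \coloneqq \begin{bmatrix}\nabla h(x)\\ 0\end{bmatrix} \in \mR^{n+1}$ and letting $e\in\mR^m$ be the all-ones vector, this is exactly the rank-one decomposition $Q(x) = g(x)e^T + D$, where $D$ is the matrix defined in \reff{sec3:def:D2}.

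The crucial step is to collapse the $x$-dependent rank-one term $g(x)e^T$ using the simplex normalization. Because $x\in\mc{WP}$, Theorem \ref{theorem1} provides a weight vector $w\in\Delta^{m-1}$ (and a multiplier $\lambda$) with $e^T w = w_1+\cdots+w_m = 1$; consequently $Q(x)w = g(x)(e^Tw) + Dw = g(x) + Dw$. Substituting into $Q(x)w=\hat{c}_{\lambda}(x)$ and moving $g(x)$ to the right-hand side gives the constant-coefficient system $Dw = \hat{c}_{\lambda}(x) - g(x)$. A componentwise check then identifies this right-hand side: its top block equals $\sum_{i=1}^{l}\lambda_i\nabla c_i(x) - \nabla h(x)$ and its last entry equals $1$, which is precisely $\hat{h}_1(x) + \hat{h}_2(x,\lambda)$ from \reff{sec3:def:D2}. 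Hence $Dw = \hat{h}_1(x)+\hat{h}_2(x,\lambda)$.

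It remains to invert this system for $w$. Here I would verify that $D$ has full column rank $m$: from $\sum_j \alpha_j \begin{bmatrix}d_j\\ 1\end{bmatrix}=0$ the top block gives $\sum_j \alpha_j d_j = 0$, and linear independence of $d_1,\dots,d_m$ forces $\alpha=0$. Thus $D^TD$ is invertible and $D$ has trivial kernel, so the (consistent) system $Dw=\hat{h}_1(x)+\hat{h}_2(x,\lambda)$ has a unique solution; left-multiplying by $(D^TD)^{-1}D^T$ yields $w = (D^TD)^{-1}D^T\big(\hat{h}_1(x)+\hat{h}_2(x,\lambda)\big)$, which is \reff{rep:w}.

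I do not foresee a serious obstacle: the whole argument is a short linear-algebra computation, and its single genuine idea is that the normalization $e^Tw=1$ absorbs the only $x$-dependent part of $Q(x)$ into the right-hand side, leaving the fixed matrix $D$. In particular, unlike the general representation \reff{rep:wpp2}, no nonsingularity hypothesis on $Q(x)$ itself is needed, only linear independence of the $d_j$, which already forces the augmented columns $\begin{bmatrix}d_j\\ 1\end{bmatrix}$ to be independent via the top block alone. The one line deserving care is this passage from independence of the $d_j$ in $\mR^n$ to independence of the augmented vectors in $\mR^{n+1}$.
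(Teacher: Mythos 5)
Your proof is correct and takes essentially the same route as the paper: your rank-one decomposition $Q(x)=g(x)e^T+D$ collapsed by the normalization $e^Tw=1$ is exactly the paper's ``row elimination'' step, turning $Q(x)w=\hat{c}_{\lambda}(x)$ into $Dw=\hat{h}_1(x)+\hat{h}_2(x,\lambda)$, and both arguments conclude from the full column rank of $D$. The only difference is that you spell out the details (the componentwise identification of the right-hand side and the verification that linear independence of $d_1,\dots,d_m$ makes $D^TD$ invertible) that the paper states without elaboration.
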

\begin{proof}
For this class of MOPs, the equation \reff{sec3:def:D} becomes
\begin{equation}
 \label{sec3:def:D3}
\left[\begin{array}{cccc}
\nabla h(x)+ d_{1} & \nabla h(x)+d_{2} & \cdots & \nabla h(x)+d_{m} \\
1 & 1 & \cdots & 1 \\
\end{array}\right]
w =\begin{bmatrix}
\sum\limits_{i=1}^l \lambda_i\nabla c_i(x)\\1
\end{bmatrix} .
\end{equation}
Using row elimination, we can get
\begin{equation}
Dw  =  \hat{h}_1(x)+\hat{h}_2(x, \lmd).
\end{equation}
Since the vectors $d_1,\dots,d_m$ are linearly independent,
the matrix $D$ has full column rank,
so the above implies the expression \reff{rep:w}.
\end{proof}


\subsection{Representation of the set $\mathcal{WP}$ in terms of $x$ only}
\label{rep3}

In this subsection, we discuss how to express
the weakly Pareto set $\mathcal{WP}$ solely in terms of $x$.
When the MOP \reff{mop} is convex and the matrix $C(x)$ as in \reff{sec3:def:C}  is nonsingular,
the set $\mathcal{WP}$ can be given as in \reff{rep:wpp}.
Let $\ell_i(x)^T$ denote the $i$th row of $C^{\prime}_1(x)$. Then, we have
\be \label{expr:lmdi(x,w)}
\lambda_i(x,w)=\sum\limits_{j=1}^{m} w_j u_{ij}(x) \quad \text{where}\quad
u_{ij}(x) \coloneqq   \ell_i(x)^T \nabla f_j(x).
\ee
For every $x\in \mc{WP}$, it holds that
\[
\sum\limits_{i=1}^{l}\lambda_i(x,w)\nabla c_i(x) = \sum\limits_{i=1}^{l}
    (\sum\limits_{j=1}^{m} w_j   u_{ij}(x))   \nabla c_i(x)\\
= \sum\limits_{j=1}^{m}\sum\limits_{i=1}^{l} w_j u_{ij}(x)  \nabla c_i(x),
\]
\[
\lambda_i(x,w)\cdot c_i(x)=\sum\limits_{j=1}^{m} w_j  u_{ij}(x) c_i(x)=0.
\]
So every WPP of \reff{mop} satisfies the equation
\begin{equation}
\label{def:P}
\underbrace{\begin{bmatrix}
\sum\limits_{i=1}^l u_{i1}(x) \nabla c_i(x) -\nabla f_1(x) & \cdots &
      \sum\limits_{i=1}^l  u_{im}(x) \nabla c_i(x) -\nabla f_m(x)  \\
     u_{11}(x)  c_1(x)  & \cdots  &  u_{1m}(x)  c_1(x)\\
    \vdots&\vdots&\vdots\\
     u_{l1}(x)  c_l(x)&\cdots& u_{lm}(x) c_l(x)  \\
    1 & \dots & 1 \end{bmatrix}}_{P(x)}
w = \begin{bmatrix}
        0 \\ 0 \\ \vdots \\ 0 \\  1
    \end{bmatrix} .
\end{equation}

When the matrix polynomial $P(x)$ is nonsingular (i.e.,  $\rank \,P(x) = m$
for all $x\in\mathbb{C}^n$), there exists a polynomial matrix
$P^{\prime}(x)\in \mR[x]^{m\times (n+l+1)}$ such that  $P^{\prime}(x)P(x)=I_m$.
Then, we can get
\be \label{expr:w(x)}
w  =  w(x) \coloneqq  P^{\prime}(x)\cdot e_{n+l+1} .
\ee
We write the above polynomial vector $w(x)$ as
\[
w(x)  =   (w_1(x),\dots,w_m(x)) .
\]
Here we denote by $w_j(x)$ the $j$th component of $P^{\prime}(x)\cdot e_{n+l+1}$.
Further, we have that for each $i=1,\ldots,m$,
\[
\lambda_i(x) = \sum\limits_{j=1}^{m} w_j(x) u_{ij}(x).
\]
The weakly Pareto set $\mathcal{WP}$ is therefore represented as
\begin{equation}\label{rep:wpp5}
\mathcal{WP}=\left\{x\in\r^n\,\begin{array}{|l}
   \sum\limits_{j=1}^{m} w_j(x) \nabla f_j(x) =\sum\limits_{i=1}^{l}\lambda_i(x)\nabla c_i(x) \\
     0\le   \lambda_i(x) \perp  c_i(x) \ge 0,\, i = 1, \ldots, l,  \\
     w(x)\ge 0,\, w_1(x) + \cdots + w_m(x) = 1 \\
\end{array}\right\} .
\end{equation}
The above description is solely in terms of $x$.

\begin{prop}
\label{prop:nonsingular}
    For the MOP \reff{mop} with objectives given by \reff{spe:MOP_new}, the equation \reff{def:P} reduces to
\begin{equation}
\label{Mx}
    \begin{bmatrix}
        \bar{c}_1(x) & \dots & \bar{c}_m(x) \\
        v_{11}(x) c_1(x) & \dots & v_{1m}(x) c_1(x)\\
        \vdots & \vdots & \vdots \\
  v_{l1}(x) c_l(x)  & \dots & v_{lm}(x) c_l(x)\\
  1 & \dots & 1\\
\end{bmatrix}
w = \begin{bmatrix}
        \nabla h(x)-\sum\limits_{i=1}^l h_i(x)\nabla c_i(x)\\
         -h_1(x) c_1(x)  \\   \vdots \\   -h_l(x) c_l(x)\\1
    \end{bmatrix},
\end{equation}
where
\[
v_{ij}(x)   =  \ell_i(x)^Td_j,\, h_i(x)=\ell_i(x)^T\nabla h(x),
\]
\[
\bar{c}_j(x)=\sum\limits_{i=1}^l v_{ij}(x) \nabla c_i(x)-d_j.
\]
For the case $K=\mR^n$, if $D$ has full column rank,
then $w(x)$ can be expressed as
\[
w(x) \, =  \, (D^TD)^{-1}D^T\hat{h}_1(x).
\]
 Here, $D$ and $\hat{h}_1(x)$ are as in \reff{sec3:def:D2}.
\end{prop}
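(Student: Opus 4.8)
The plan is to substitute the special form \reff{spe:MOP_new} of the objectives into the scalar building blocks of $P(x)$ and then perform row elimination driven by the bottom row of ones, which encodes the normalization $w_1(x)+\cdots+w_m(x)=1$. Since $f_j(x)=h(x)+d_j^Tx$, we have $\nabla f_j(x)=\nabla h(x)+d_j$, so each scalar $u_{ij}(x)=\ell_i(x)^T\nabla f_j(x)$ splits as
\[
u_{ij}(x)=\ell_i(x)^T\nabla h(x)+\ell_i(x)^T d_j=h_i(x)+v_{ij}(x).
\]
The crucial observation is that $h_i(x)$ and $\nabla h(x)$ do not depend on the column index $j$, so they contribute the same quantity to every column of $P(x)$.

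First I would rewrite the top block of the $j$th column of $P(x)$. Using the splitting above,
\[
\sum_{i=1}^l u_{ij}(x)\nabla c_i(x)-\nabla f_j(x)
=\Big(\sum_{i=1}^l v_{ij}(x)\nabla c_i(x)-d_j\Big)
+\Big(\sum_{i=1}^l h_i(x)\nabla c_i(x)-\nabla h(x)\Big),
\]
where the first bracket is exactly $\bar c_j(x)$ and the second bracket is identical for all $j$. Likewise the $i$th middle block of the $j$th column becomes $u_{ij}(x)c_i(x)=v_{ij}(x)c_i(x)+h_i(x)c_i(x)$, again with a $j$-independent summand $h_i(x)c_i(x)$. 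Next I would evaluate the identity $P(x)w=e_{n+l+1}$ block by block and, in each block, collect the $j$-independent summand multiplied by $\sum_j w_j$. Because the last row forces $\sum_{j=1}^m w_j=1$, every such summand factors out and moves intact to the right-hand side. In the top block this yields
\[
\sum_{j=1}^m \bar c_j(x)w_j=\nabla h(x)-\sum_{i=1}^l h_i(x)\nabla c_i(x),
\]
in the $i$th middle block it yields $\sum_{j=1}^m v_{ij}(x)c_i(x)w_j=-h_i(x)c_i(x)$, and the last row is unchanged. Assembling these identities is precisely the reduced system \reff{Mx}.

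For the case $K=\mR^n$ there are no constraints, so all middle blocks vanish (i.e.\ $l=0$), the sums $\sum_{i=1}^l(\cdots)$ are empty, and $\bar c_j(x)=-d_j$. The reduced system \reff{Mx} then collapses to $-\sum_j d_j w_j=\nabla h(x)$ together with $\sum_j w_j=1$, that is $Dw=\hat h_1(x)$ with $D$ and $\hat h_1(x)$ as in \reff{sec3:def:D2}. When $D$ has full column rank, left-multiplying the normal equations by $(D^TD)^{-1}D^T$ gives $w(x)=(D^TD)^{-1}D^T\hat h_1(x)$.

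There is no deep difficulty here; the proof is careful bookkeeping. The only point to get right is the systematic separation of the $j$-dependent parts (collected in $\bar c_j$ and $v_{ij}$) from the common $j$-independent parts (built from $h$), together with the recognition that the normalization row is exactly what licenses pulling the latter out to the right-hand side. I would double-check the signs in $\bar c_j(x)=\sum_i v_{ij}(x)\nabla c_i(x)-d_j$ and in the right-hand side $\nabla h(x)-\sum_i h_i(x)\nabla c_i(x)$, since the sign flips arise from transferring the $\nabla f_j$- and $h$-terms across the equality.
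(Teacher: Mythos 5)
Your proof is correct and follows exactly the route the paper intends: the paper omits the verification of the reduction to \reff{Mx} ("one can easily verify"), and your splitting $u_{ij}=h_i+v_{ij}$ together with using the normalization row $\sum_j w_j=1$ to move the $j$-independent terms to the right-hand side is precisely that verification, while your treatment of the $K=\mR^n$ case (system collapses to $Dw=\hat h_1(x)$, then solve via $(D^TD)^{-1}D^T$) matches the paper's argument verbatim. No gaps; the bookkeeping and signs all check out.
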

\begin{proof}
One can easily verify that \reff{def:P} reduces to \reff{Mx},
and we omit it for neatness. When $K=\mR^n$,
there are no constraints, so \reff{Mx} becomes
\[
D  w  =  \hat{h}_1(x),
\]
which implies the above formula for $w(x)$.
\end{proof}

\begin{exa}
\label{example P nonsingular}
Consider the MOP \reff{mop} with two objective functions:
\[
f_1(x)=x_1^2+x_2^2+x_1-2x_2,\, f_2(x)=x_1^2+x_2^2+2x_1-2x_2
\]
over the feasible set
\[
K=\{x\in\r^2: 1-x_1^2-x_2\ge 0\}.
\]
Then, we have
\[
C(x)=\begin{bmatrix}
    -2x_1\\-1\\1-x_1^2-x_2
\end{bmatrix},\quad L_1(x) = \begin{bmatrix} 0 & -1
\end{bmatrix},
\]
and the Lagrange multiplier expression
\[
\lambda(x,w)= L_1(x)(w_1\nabla f_1(x)+w_2\nabla f_2(x))=2-2w_1x_1-2w_2x_2.
\]
The matrix $P(x)$ as in \reff{def:P} is
\[
P(x) = \begin{bmatrix}
    -1 - 4x_1 & -2-4x_1\\
    0 & 0\\
    -2x_1^2-2x_2+2&-2x_1^2-2x_2+2\\1&1
\end{bmatrix}.
\]
One can verify that  $P(x)$ is nonsingular, and $P^{\prime}P=I_{2}$ for
\[
P^{\prime}(x)=\begin{bmatrix}
    1 & 0 & 0 & -4x_1x_2+6x_1+2\\
    -1 & 0 & 0 & 4x_1x_2-6x_1-1
\end{bmatrix}.
\]
Hence, the vector of polynomials
\[
w(x) = P'(x) e_{n+l+1}=\begin{bmatrix}
-4x_1x_2+6x_1+2\\ 4x_1x_2-6x_1-1\end{bmatrix}
\]
is the polynomial expression for the weight vector, and the vector of polynomials
\[
\lambda(x)=8x_1^2x_2-8x_1x_2^2-12x_1^2+12x_1x_2-4x_1+2x_2+2
\]
is the polynomial expression for the multiplier vector.

\end{exa}

\subsection{Comparisons of representations \reff{rep:wpp}, \reff{rep:wpp2} and \reff{rep:wpp5}}
\label{section: comparison}
We make some comparisons between the three representations of
the weakly Pareto set $\mathcal{WP}$ mentioned above.

\bit
\item[(i)]
When the matrix  $C(x)$ as in \reff{sec3:def:C} is nonsingular, we can construct the representation \reff{rep:wpp}, which expresses $\mathcal{WP}$ in terms of $x,w$. This is based on the expression \reff{ref:L1}, where the Lagrange multiplier vector $\lambda$ is expressed in terms of  $x,w$.
When the number of objectives $m$ is relatively small and the number of constraints
$\ell$ is relatively large, this representation efficiently reduces the computational cost.

\item[(ii)]
When the matrix $Q(x)$ as in  \reff{sec3:def:D} is nonsingular, we can express the weight vector $w$
in terms of $x$ and $\lambda$,  leading to   the representation \reff{rep:wpp2}. This is particularly beneficial when the number of constraints  $\ell$ is relatively small and the number of objectives $m$ is relatively large,
as it helps to reduce computational complexity.

\item[(iii)]
When both the matrix  $C(x)$ and the matrix $P(x)$ as in \reff{def:P} are nonsingular,
we can eliminate both the Lagrange multiplier vector $\lambda$ and the weight vector $w$,
obtaining the representation \reff{rep:wpp5}. If the matrices $C^{\prime}(x)$ and $P^{\prime}(x)$
have low degrees, this representation is computationally efficient and convenient.

\eit




\section{Moment-SOS Relaxations for the OWP}
\label{section4}

In this section, we apply Moment-SOS relaxations to solve the OWP \reff{oop},
using representations of the weakly Pareto set $\mc{WP}$ given in Section \ref{section3}. We refer to Section \ref{section: comparison} for differences  between different representations and algorithms.

\subsection{The Moment--SOS hierarchy based on the representation \reff{rep:wpp}}

When the MOP \reff{mop} is convex and the polynomial matrix
$C(x)$ as in \reff{sec3:def:C} is nonsingular,
the weakly Pareto set $\mathcal{WP}$  can be represented as in \reff{rep:wpp}.
Then, the OWP \reff{oop} is equivalent to
\be  \label{equ1:oop}
\left\{ \baray{rl}
\min\limits_{(x,w)} & f_0(x)  \\
\st & \sum\limits_{j=1}^{m} w_j \nabla f_j(x) =
\sum\limits_{i=1}^l \lambda_i(x,w)\nabla c_i(x),\\
&\lambda_i(x,w)\cdot c_i(x)=0, \, i =1, \ldots, l, \\
&\lambda_i(x,w)\geq 0,\, c_i(x)\ge 0, \, i =1, \ldots, l, \\
&1-  \|w\|^2\ge 0,\, e^Tw-1=0,
\\
& w = ( w_1,\dots,w_{m})\ge 0,\, 
\earay \right.
\ee
which is a polynomial optimization in $(x,w)$. We note that the additional constraint
$1-\sum_{i=1}^mw_i^2\ge 0$ does not affect the feasible set of \reff{equ1:oop}, but it can effectively improve the numerical performance of our subsequent algorithms.
In particular, if \reff{equ1:oop} is infeasible, then there are no WPPs.
Denote the polynomial tuples
\begin{equation}\label{def:con}
\begin{split}
\Phi_{x,w}:=& \Big \{\sum\limits_{j=1}^{m} w_j \nabla_{x_k} f_j(x)-
     \sum\limits_{i=1}^{l} \lambda_i(x,w)\nabla_{x_k} c_i(x) \Big \}_{k \in [n] } \cup \\
    & \big \{\lambda_i(x,w)\cdot c_i(x) \big \}_{ i \in [l] }   \cup \{e^T w-1\},  \\
\Psi_{x,w}:= & \big\{1-\|w\|^2\big\}\cup\big \{\lambda_i(x,w)  \big \}_{ i\in [l] }  \cup
       \big \{c_i(x) \big \}_{ i\in [l] } \cup \big\{w_i \big \}_{ i \in [m] }.
\end{split}
\end{equation}

Denote the degree
\[
d_0:= \max \{\lceil\operatorname{deg}(p)/2\rceil,\,p\in\Phi_{x,w}\cup\Psi_{x,w}\}.
\]
The problem \reff{equ1:oop} can be rewritten as
\be \label{simplep}
\left\{
\begin{array}{cl}
    \min\limits_{(x,w)} & f_0(x) \\
    \st & \phi(x,w)=0\,(\forall\, \phi \in\Phi_{x,w}),\\
    & \psi(x,w)\ge 0\,(\forall\, \psi\in \Psi_{x,w}).
\end{array}   \right.
\ee
For a degree $k\ge d_0$, the $k$th order SOS relaxation for solving \reff{simplep} is
\begin{equation}
\label{sos_k}
    \left\{
    \begin{array}{cl}
        \max\limits_{\gamma}  & \gamma \\
        \st & f_0 - \gamma \in \ideal{\Phi_{x,w}}_{2k} + \qmod{\Psi_{x,w}}_{2k}.
    \end{array}
    \right.
\end{equation}
The dual optimization problem of \reff{sos_k} is the $k$th order moment relaxation:
\begin{equation}
\label{mom_k}
\left\{
\begin{array}{cl}
    \min\limits_{y} & \langle f_0,y\rangle \\
    \st
     & L_{\phi}^{(k)}[y]=0 \, (\phi\in\Phi_{x,w}),   \\
    & L_{\psi}^{(k)}[y]\succeq 0\, (\psi\in\Psi_{x,w}),  \\
    & M_k[y]\succeq 0,\\
    &  y_0=1, \,y \in \mR^{n+m}_{2k}.\\
\end{array}
\right.
\end{equation}
Denote the optimal values of \reff{sos_k} and \reff{mom_k} by $f_{sos,k}$, $f_{mom,k}$,
respectively.  The hierarchy of relaxations \reff{sos_k}--\reff{mom_k} is said to
have finite convergence if $f_{sos,k}=f_{min}$ for all $k$ sufficiently large.

In practice, the flat  truncation  condition (see \cite{niecertificate,HenLas05})
is often used to detect finite convergence and to extract minimizers.
Suppose $y^{*}$ is a minimizer of $\reff{mom_k}$ for a relaxation order $k$.
If there exists an integer $t \in[d_0, k]$ such that
\be \label{rank}
\operatorname{rank} M_{t} [ y^* ] \, = \,
\operatorname{rank} M_{t-d_0}[ y^* ],
\ee
then
the truncation $y^*|_{2t}$ admits a finitely $r$-atomic probability measure $\mu^*$
whose support is contained in $K$.
That is, there exist points $(u_1,w_1),\ldots, (u_r,w_r)\subseteq \mR^{n+m}$,
which are feasible points of \reff{equ1:oop}, such that
\[
\mu^*=\sum_{j=1}^r \gamma_j \delta_{(u_j,w_j)}, \quad
\sum_{j=1}^r \gamma_j=1, \quad \gamma_j>0, \quad j=1, \ldots, r,
\]
where $\delta_{(u_j,w_j)}$ denotes the unit Dirac measure supported at $(u_j,w_j)$.
One can further show that $f_{k,mom}=f_{\min}$ and $(u_1,w_1),\ldots, (u_r,w_r)$
are minimizers of \reff{equ1:oop}.

The following is the algorithm for solving \reff{equ1:oop}.

\begin{alg} \label{alg1}
Let $\Phi_{x,w},\Psi_{x,w}$ be as in \reff{def:con} and   let $k := d_0$.

\begin{description}

\item[Step ~1]  Solve the moment relaxation \reff{mom_k}.
If it is infeasible, then \reff{oop} is infeasible
(i.e., there are no weakly Pareto points) and stop; otherwise,
solve \reff{mom_k} for a minimizer $y^{\ast}$.

\item[Step~ 2] Let $t:=d_0$. If $y^{\ast}$ satisfies the rank condition \reff{rank},
then extract $r \coloneqq \rank M_t[y^{\ast}]$ minimizers for \reff{equ1:oop} and stop.

\item[Step 3~] If \reff{rank} fails to hold and $t<k$, let $t:=t+1$ and go to Step~2;
otherwise, let $k = k+1$ and go to Step~1.

\end{description}
\end{alg}

The following is the convergence result for Algorithm~\ref{alg1}.

\begin{thm}
Suppose the MOP \reff{mop} is convex, the matrix $C(x)$
is nonsingular.
Then, we have:

\bit

\item[(i)] If the moment relaxation \reff{mom_k} is infeasible,
the weakly Pareto set $\mathcal{WP}=\emptyset$.
Conversely, if the set $\mathcal{WP}=\emptyset$, then the relaxation \reff{mom_k}
is infeasible for sufficiently large $k$.

\item[(ii)]
Suppose that the set $\mathcal{WP}\neq \emptyset$
and  the quadratic module $\ideal{\Phi_{x,w}} + \qmod{\Psi_{x,w}}$ is Archimedean. Then, we have $f_{k,sos}\rightarrow f_{\min}$.
Furthermore, if $\Phi_{x,w}(x)=0$ has only finitely many real solutions,
then \reff{rank} holds when $k$ is sufficiently large.

\eit

\end{thm}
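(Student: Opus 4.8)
The plan is to prove the three claims by combining the exact reformulation of the OWP with standard convergence theory for the Moment--SOS hierarchy, relying on the fact that the nonsingularity of $C(x)$ makes the representation \reff{rep:wpp} exact. First I would establish the equivalence between \reff{equ1:oop} and the OWP \reff{oop}: since the MOP is convex and $C(x)$ is nonsingular, Theorem~\ref{theorem1}(i) together with the Lagrange multiplier expression \reff{ref:L1} shows that a feasible $x$ of \reff{equ1:oop} is exactly a weakly Pareto point, so the projection onto the $x$-coordinates of the feasible set of \reff{equ1:oop} equals $\mc{WP}$, and the two problems share the optimal value $f_{\min}$. This makes \reff{simplep} a faithful polynomial reformulation to which the hierarchy \reff{sos_k}--\reff{mom_k} applies.

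For part (i), the infeasibility direction is immediate: a feasible point of \reff{mom_k} would, via its degree-one moments (or more carefully, via the atomic measure it supports), produce a feasible point of \reff{equ1:oop} and hence a point of $\mc{WP}$; so infeasibility of the relaxation forces $\mc{WP}=\emptyset$. For the converse, I would invoke the standard result (as in \cite{nie2023moment}) that when the real variety defined by the constraint tuples is empty, the constant $-1$ (or equivalently, an infeasibility certificate) lies in the truncated ideal-plus-quadratic-module at some finite order; the Archimedean-type reasoning or a Positivstellensatz certificate of emptiness then shows the moment relaxation \reff{mom_k} becomes infeasible once $k$ is large enough. The key point is that $\mc{WP}=\emptyset$ is equivalent to the real feasible set of \reff{simplep} being empty, by the established equivalence.

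For part (ii), asymptotic convergence $f_{k,sos}\to f_{\min}$ follows directly from Lasserre's theorem \cite{158}: the Archimedean hypothesis on $\ideal{\Phi_{x,w}}+\qmod{\Psi_{x,w}}$ is exactly the condition needed to guarantee that the SOS relaxation values converge upward to the true minimum of $f_0$ over the feasible set of \reff{simplep}, which equals $f_{\min}$ by part one of the argument. For the finite-convergence/extraction claim under the hypothesis that $\Phi_{x,w}=0$ has only finitely many real solutions, I would argue that finitely many real common zeros, intersected with the inequality constraints, yields a finite real feasible set; then the moment relaxation has optimizers whose flat truncation condition \reff{rank} must eventually hold. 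The cleanest route is to cite the finite-convergence results \cites{hny,niecondition} for polynomial optimization over a zero-dimensional (or finite) real variety, which guarantee that flat truncation is satisfied for all sufficiently large $k$, so Algorithm~\ref{alg1} terminates and extracts minimizers.

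The main obstacle I anticipate is the converse in part (i) and the finite-convergence part of (ii): these are not purely formal and require care in translating ``$\mc{WP}=\emptyset$'' or ``finitely many real solutions of $\Phi_{x,w}=0$'' into the precise hypotheses of the cited infeasibility and flat-truncation theorems. In particular, one must verify that the equality constraints $\Phi_{x,w}$ together with the Archimedean condition genuinely force a finite (zero-dimensional) real feasible set so that the extraction machinery applies, rather than merely a finite solution set of the equalities alone; handling the interplay between the equalities $\Phi_{x,w}=0$ and the inequalities $\Psi_{x,w}\ge 0$, and confirming the relevant quadratic module is Archimedean so that the boundedness needed for flat truncation holds, is where the real work lies.
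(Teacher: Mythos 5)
Your overall strategy (reformulate the OWP via \reff{rep:wpp}, use a Positivstellensatz certificate for the converse of (i), and invoke Lasserre's convergence theorem plus finite-variety results for (ii)) is the same as the paper's, but your argument for the first implication of part (i) contains a genuine error. You claim that a feasible point $y$ of the moment relaxation \reff{mom_k} ``produces, via its degree-one moments (or the atomic measure it supports), a feasible point of \reff{equ1:oop}.'' This is false: a feasible vector of a finite-order moment relaxation need not admit any representing measure, let alone an atomic one supported in the feasible set---that is precisely what makes \reff{mom_k} a \emph{relaxation}, and extracting points from $y$ requires the flat truncation condition \reff{rank}, which you cannot assume here (taking degree-one moments fails too, since the feasible set of \reff{equ1:oop} is nonconvex because of the complementarity constraints). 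Worse, even if the claim were true it proves the wrong implication: ``\reff{mom_k} feasible $\Rightarrow \mathcal{WP}\neq\emptyset$'' is the contrapositive of ``$\mathcal{WP}=\emptyset \Rightarrow$ \reff{mom_k} infeasible,'' i.e., of the \emph{converse} statement, and it would establish that converse for \emph{every} $k$, contradicting the fact that emptiness of $\mathcal{WP}$ is only certified for $k$ sufficiently large---a red flag you should have caught. The correct (and easy) argument runs in the opposite direction: if $\mathcal{WP}\neq\emptyset$, pick $x^*\in\mathcal{WP}$; by Theorem~\ref{theorem1} there is $w\in\Delta^{m-1}$ making $(x^*,w)$ feasible for \reff{equ1:oop}, and then the truncated moment sequence of the Dirac measure $\delta_{(x^*,w)}$ is feasible for \reff{mom_k} at every order; the contrapositive gives ``\reff{mom_k} infeasible $\Rightarrow \mathcal{WP}=\emptyset$.''

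The rest of your proposal essentially matches the paper. For the converse of (i), the paper writes $2+h+s=0$ with $h\in\ideal{\Phi_{x,w}}$ and $s$ in the \emph{preordering} of $\Psi_{x,w}$ (Positivstellensatz), then uses the Archimedean property (Putinar) to replace $1+s$ by an element of $\ideal{\Phi_{x,w}}+\qmod{\Psi_{x,w}}$, concluding $-1\in\ideal{\Phi_{x,w}}_{2k}+\qmod{\Psi_{x,w}}_{2k}$ for large $k$, whence \reff{sos_k} is unbounded above and \reff{mom_k} is infeasible; your sketch is this argument, but you should make explicit that the passage from a preordering certificate to a quadratic-module certificate is exactly where the Archimedean-type hypothesis enters, since \reff{mom_k} is built from the quadratic module, not the preordering. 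For (ii), asymptotic convergence is Lasserre's theorem \cite{158} as you say; for flat truncation under finitely many real solutions of $\Phi_{x,w}=0$, the relevant results are those for finite real varieties (the paper cites \cite{laurentfinite} and \cite{niefinite}), rather than the optimality-condition-based finite convergence results you point to, though the structure of the argument---delegating to known flat-truncation theorems---is the same.
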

\begin{proof}
(i)  Suppose the weakly Pareto set $\mathcal{WP}\neq \emptyset$,
and let $x^*\in \mathcal{WP}$. By Theorem \ref{theorem1},  $x^*$ is a minimizer of
 the linear scalarization problem \reff{lsp}
for some weight vector $w\in \Delta^{m-1}$.
Then, we know that $(x^*,w)$ is feasible for \reff{equ1:oop}, and the truncated multisequence   $[(x^*,w)]_{2k}$ is feasible for \reff{mom_k}.
If $\mathcal{WP}=\emptyset$, we have
\[
\left \{(x,w)\in \mR^{n+m}
\left| \baray{l}
\phi(x,w)=0\,(\forall\, \phi \in\Phi_{x,w}),\\
\psi(x,w)\geq 0\,(\forall\, \psi \in\Psi_{x,w})
\earay \right.
\right \}=\emptyset.
\]
By Positivstellensatz \cite{nie2023moment}, there exist $h\in \ideal{\Phi_{x,w}}$,
$s\in \operatorname{Pre}[\Psi_{x,w}]$ such that $2+h+s=0$, where $\operatorname{Pre}[\Psi_{x,w}]$ is the preordering generated by the polynomial tuple $\Psi_{x,w}$. Since $1+s(x)>0$ for $x\in \mathcal{WP}$ and the set $\ideal{\Phi_{x,w}} + \qmod{\Psi_{x,w}}$ is Archimedean, we have  $1+s\in \ideal{\Phi_{x,w}} + \qmod{\Psi_{x,w}}$. It implies that
\[
-1=1+h+s\in \ideal{\Phi_{x,w}} + \qmod{\Psi_{x,w}}.
\]
This implies that
\begin{eqnarray*}
f_0 - \gamma & = &  \frac{(f_0+1)^2}{2}+(-1)\cdot \frac{(f_0-1)^2}{2}+(-1)\cdot \gamma  \\
  & \in  &  \ideal{\Phi_{x,w}}_{2k} + \qmod{\Psi_{x,w}}_{2k}
\end{eqnarray*}
for all $\gamma\geq 0$ when $k$ big enough. Then, we know that \reff{sos_k}
is unbounded above and \reff{mom_k} is infeasible for $k$ sufficiently large.

(ii) Since the set $\ideal{\Phi_{x,w}} + \qmod{\Psi_{x,w}}$ is Archimedean,
the asymptotic convergence  $f_{k,mom}\rightarrow f_{\min}$
follows from \cite{158}. When $\Phi_{x,w}(x)=0$
has only finitely many real solutions,
we refer to \cite{laurentfinite, niefinite} for this conclusion.
\end{proof}

\subsection{The Moment--SOS hierarchy based on the representation \reff{rep:wpp2}}

When the MOP \reff{mop} is convex and the polynomial matrix $Q(x)$
as in \reff{sec3:def:D} is nonsingular,
the weakly Pareto set  $\mathcal{WP}$ can be represented as in \reff{rep:wpp2}.
Then, the OWP \reff{oop} is equivalent to
\be  \label{ref4.2}
\left\{ \baray{rl}
\min\limits_{(x,\lambda)} & f_0(x)  \\
\st & \sum\limits_{j=1}^{m} w_j(x,\lambda) \nabla f_j(x)
 =\sum\limits_{i=1}^{l}\lambda_i\nabla c_i(x), \\
&\lambda_i\cdot c_i(x)=0, \, i =1, \ldots, l, \\
&\lambda_i\geq 0,\, c_i(x)\ge 0, \, i =1, \ldots, l, \\
& 1-\|w(x,\lambda)\|^2\ge0,\,
1-e^Tw(x,\lambda)=0,\\
& w = ( w_1,\dots,w_{m})\ge 0,
\earay \right.
\ee
This is a polynomial optimization problem in $(x,\lambda)$.
Denote the polynomial tuples
\be  \label{def:con*}
\begin{split}
\Phi_{x,\lambda}:=& \Big\{\sum\limits_{j=1}^{m} w_j(x,\lambda) \nabla_{x_k} f_j(x) -\sum\limits_{i=1}^{l}\lambda_i \nabla_{x_k}c_i(x) \Big\}_{ k \in [n] }  \cup \\
& \big  \{\lambda_i\cdot c_i(x) \big \}_{ i \in [l] } \cup
  \big  \{
  1-e^Tw(x,\lambda)
  \},
\\
\Psi_{x,\lambda}:= & \big\{1-\|w(x,\lambda)\|^2\}
\cup
    \big \{c_i(x) \big \}_{ i\in [l] }  \cup \big \{w_i(x,\lambda) \big \}_{ i\in [l] }.
\end{split}
\ee
Denote the degree
\[
d_0^*  \coloneqq  \max \{\lceil\operatorname{deg}(p)/2\rceil, \,
 p\in\Phi_{x,\lambda}\cup\Psi_{x,\lambda}\}.
\]
For a degree $k\geq d_0^*$, the $k$th order SOS relaxation for solving \reff{ref4.2} is
\begin{equation}
\label{sos_k*}
\left\{
\begin{array}{cl}
    \max\limits_{\gamma}  & \gamma \\
    \st & f_0 - \gamma \in \ideal{\Phi_{x,\lambda}}_{2k} + \qmod{\Psi_{x,\lambda}}_{2k}.
\end{array}
\right.
\end{equation}
The dual optimization problem of \reff{sos_k*} is the $k$th order moment relaxation:
\begin{equation}
\label{mom_k*}
\left\{
\begin{array}{cl}
    \min\limits_{y} & \langle f_0,y\rangle \\
    \st
    & L_{\phi}^{(k)}[y]=0 \, (\phi\in\Phi_{x,\lambda}),  \\
    & L_{\psi}^{(k)}[y]\succeq 0\, (\psi\in\Psi_{x,\lambda}),  \\
    & M_k[y]\succeq 0,\\
    & y_0=1,\,y\in \mR^{n+l}_{2k}.
\end{array}
\right.
\end{equation}

The following algorithm is the analogue of Algorithm \ref{alg1} for solving \reff{ref4.2}.

\begin{alg} \label{alg2}
Let $\Phi_{x,\lambda},\Psi_{x,\lambda}$ be as in \reff{def:con*} and   let $k := d_0$.

\begin{description}

\item[Step ~1]  Solve the semidefinite relaxation \reff{mom_k*}.
If it is infeasible, then \reff{oop} is feasible and stop;
otherwise, solve it for a minimizer $y^{\ast}$.

\item[Step~ 2] Let $t:=d_0^*$. If $y^{\ast}$ satisfies the rank condition \reff{rank},
then extract  $r:=\rank M_t[y^{\ast}]$ minimizers for \reff{ref4.2} and stop.

\item[Step 3~] If \reff{rank} fails to hold and $t<k$, let $t:=t+1$ and go to Step 2; otherwise,
let $k = k+1$ and go to Step 1.

\end{description}
\end{alg}

The convergence of Algorithm \ref{alg2} is similar to that of Algorithm \ref{alg1}.
We omit it for the cleanness of the paper.

\subsection{The Moment--SOS hierarchy based on the representation \reff{rep:wpp5}}

When the MOP \reff{mop} is convex and the polynomial matrices $C(x)$, $P(x)$ are nonsingular,
the weakly Pareto set  $\mathcal{WP}$ can be represented as in \reff{rep:wpp5}.
Then,  the OWP \reff{oop} is equivalent to
\be  \label{ref4.3}
\left\{ \baray{rl}
\min\limits_{x} & f_0(x)  \\
\st &  \sum\limits_{j=1}^{m} w_j(x) \nabla f_j(x) =
   \sum\limits_{i=1}^{l} \lambda_i(x)\nabla c_i(x), \\
&\lambda_i(x)\cdot c_i(x)=0, \, i = 1, \ldots, l, \\
&\lambda_i(x)\geq 0,\, c_i(x)\ge 0, \, i = 1, \ldots, l, \\
& 1-\|w(x)\|^2\ge 0,\, e^Tw(x)-1=0,\\
& w(x) = ( w_1(x),\dots,w_m(x)) \ge 0,\,
\earay \right.
\ee
This is a polynomial optimization problem in $x$.
Denote the polynomial tuples
\be \label{def:con'}
\begin{split}
\Phi_x:=& \Big \{\sum\limits_{j=1}^{m} w_j(x) \nabla_{x_k} f_j(x)  -
     \sum\limits_{i=1}^{l} \lambda_i(x) \nabla_{x_k} c_i(x) \Big \}_{ k \in [n] }  \\
&     \cup \big \{\lambda_i(x)\cdot c_i(x) \big \}_{ i\in [l] }
  \cup \Big \{ e^Tw(x)-1 \Big \},
\\
\Psi_x:= & \big\{1-\|w(x)\|^2\big\}\cup\big \{\lambda_i(x) \big \}_{ i\in [l] } \cup \big \{c_i(x) \big \}_{ i\in [l] }
      \cup \big \{w_i(x) \big \}_{ i\in [l] }.
\end{split}
\ee
Denote the degree
\[
d_0' \coloneqq \max \{\lceil\operatorname{deg}(p)/2\rceil,
    p\in\Phi_x\cup\Psi_x\}.
\]
For a degree $k\geq d_0'$, the $k$th order SOS relaxation for solving \reff{ref4.3} is
\be  \label{sos_k'}
    \left\{
    \begin{array}{cl}
        \max\limits_{\gamma}  & \gamma \\
        \st & f_0 - \gamma \in \ideal{\Phi_x}_{2k} + \qmod{\Psi_x}_{2k}.
    \end{array}  \right.
\ee
The dual optimization problem of \reff{sos_k'} is the $k$th order moment relaxation:
\begin{equation}
\label{mom_k'}
\left\{
\begin{array}{cl}
    \min\limits_{y} & \langle f_0,y\rangle \\
    \st
    &L_{\phi}^{(k)}[y]=0 \, (\phi\in\Phi_x),  \\
    & L_{\psi}^{(k)}[y]\succeq 0\, (\psi\in\Psi_x),  \\
    & M_k[y]\succeq 0, \\
    & y_0=1,\,y\in \mR^{n}_{2k}.
\end{array}
\right.
\end{equation}

The following algorithm is analogous of Algorithm \ref{alg1}
for solving \reff{ref4.3}.

\begin{alg} \label{alg3}
Let $\Phi_x,\Psi_x$ be as in \reff{def:con'} and   let $k := d_0'$.

\begin{description}

\item[Step ~1]  Solve the semidefinite relaxation \reff{mom_k'}. If it is infeasible, then \reff{oop} is feasible and stop; otherwise, solve it for a minimizer $y^{\ast}$.

\item[Step~ 2] Let $t:=d_0'$. If $y^{\ast}$ satisfies the rank condition \reff{rank},
then extract  $r:=\rank M_t[y^{\ast}]$ minimizers for \reff{ref4.3} and  stop.

\item[Step 3~] If \reff{rank} fails to hold and $t<k$, let $t:=t+1$ and go to Step 2; otherwise, let $k = k+1$ and go to Step 1.

\end{description}
\end{alg}

The convergence of Algorithm \ref{alg3} is similar to that of Algorithm \ref{alg1}. 
We omit it for cleanness of the paper.

\section{Numerical experiments}
\label{section5}

In this section, we apply Algorithms  \ref{alg1}, \ref{alg2}, \ref{alg3} to solve polynomial optimization over the weakly Pareto sets $\mathcal{WP}$ of convex MOPs. For a given MOP, the choice of representation for $\mc{WP}$ and the algorithm depends on the problem structure and the number of objectives and constraints. We refer to Subsection~\ref{section: comparison} for how to select an appropriate representation.
The Moment-SOS relaxations   \reff{sos_k}--\reff{mom_k}, \reff{sos_k*}--\reff{mom_k*}, \reff{sos_k'}--\reff{mom_k'}  are solved by the software
{\tt GloptiPoly~3} \cite{gloptipoly},
which calls the SDP solver {\tt SeDuMi} \cite{sedumi}. The computation is implemented in MATLAB R2023b on a laptop with  16G RAM.
To maintain the neatness of the paper, the computational results are presented with four decimal digits.

\subsection{Examples using the representation~\reff{rep:wpp} }
This subsection gives  some examples of applying Algorithm \ref{alg1} to solve the
OWP  \reff{oop}.
In these examples, the number of constraints is relatively large, making the use of representation \reff{rep:wpp} and Algorithm \ref{alg1} more efficient.

\begin{exa}
\label{exa1}
Consider the OWP with the
preference function
\[
f_0(x) = (x_1-x_2+x_7^2-x_8)^2-(x_3-x_4-2x_5-4x_6)^3,
\]
and the objective function $F(x)=(f_1(x),f_2(x))$, where
\[
f_1(x) =  \sum\limits_{i=1}^8 x_i^2 - \sum\limits_{i=1}^8 x_i,~f_2(x) = (x_1-x_2)^2+(x_3-x_4)^2+(x_5-x_6)^2.
\]
The feasible set is given by
$$
K = \{x\in\r^8:\, 1-x_i^2\ge 0\,\,\, (i=1,\dots,8)\}.
$$
We use the polynomial expression as in  Example \ref{cube ref} (i) and  the Lagrange multiplier vector
$\lambda(x,w)= (\lambda_1(x,w),\dots,\lambda_8(x,w))$ can be represented as
$$
\lambda_i(x,w) = -\frac{1}{2}x_i( w_1 \frac{\partial f_1}{\partial x_i} + w_2 \frac{\partial f_2}{\partial x_i}),\, i=1,\dots,8.
$$
By Algorithm \ref{alg1}, we have $f_{min}=-216.0000$ and obtain that
$
w=(0.0000,1.0000)$,
$$
\,x=(0.0013,0.0013,0.0012,0.0012,-1.0000,-1.0000,0.0000,0.0000),
$$
at the relaxation order $k = 3$. The computation takes around 95.03 seconds.
\end{exa}

\begin{exa}
Consider the OWP with the preference function
\[
f_0(x)=(x_1^2-3)(x_2+1)-3x_3x_4-x_5^2x_6
\]
and the objective function $F(x)=(f_1(x),f_2(x))$, where
\[
f_1(x)=(x_1+2x_2)^2+(x_3+3x_4)^2+x_5,
~
f_2(x) = x_1+x_2+(x_3-\frac{1}{2}x_4+x_6)^2.
\]
The feasible set is a polyhedron given as
\[
 K = \left\{x\in\r^{6}\,\begin{array}{|l}
        -\frac{1}{2}x_1-x_2+3x_3+x_4+x_5-x_6\ge 0, \\ 2x_1-\frac{1}{2}x_2+5x_4+2x_5+3x_6\ge 0, \\
         -2x_1-x_2-4x_3+3x_4+6x_5-\frac{7}{3}x_6\ge 0,\\-\frac{9}{4}x_1-\frac{5}{2}x_2-x_3+2x_4+2x_5+\frac{8}{3}x_6\ge 0,\\
         2x_1-\frac{8}{3}x_2+4x_3+\frac{5}{2}x_5-5x_6\ge 0
    \end{array}\right\}.
\]
Then, we use the polynomial expression as in Example \ref{cube ref} (ii) and  the Lagrange multiplier vector $\lambda(x,w)=C^{\prime}_1 (w_1 \nabla f_1(x)+w_2\nabla f_2(x))$ for
   $$
   C^{\prime}_1(x) = (CC^T)^{-1}C.
   $$
    By Algorithm \ref{alg1}, we get $f_{min}=   -7.5140$ and obtain that $w=(  0.9998,0.0002)$,
    \[
    x = ( -0.0058,
   -0.0623,
   -0.0333,
    0.0672,
   -0.0907,
   -0.0581),
    \]
    at the relaxation order $k = 2$. The computation takes around 11.42 seconds.
\end{exa}

\begin{exa}
Consider the OWP with the preference function
\[
f_0(x)= x_3x_5x_8-x_1^2x_2+x_4+x_6+x_7
\]
and objective function $F(x)=(f_1(x),f_2(x)),$ where
\[
f_1(x) = (x_1+x_3)^2- \sum\limits_{i=1}^8 x_i, ~ f_2(x)=(x_7+x_8)^2+2 \sum\limits_{i=1}^8 x_i.
\]
The feasible set is given by
\[
K = \left\{x\in\r^{8}\,
\begin{array}{|l}
2x_1-x_2^2 -x_3^2+x_5\ge 0,\\
x_2-x_3^2-x_6^2\ge 0,\\
x_3-x_5^2+x_8\ge 0,\\
4x_4+x_6-x_7^2\ge 0,\\
-x_5\ge 0
\\
\end{array}
\right\}.
\]
Then, we use the polynomial expression as in  Example \ref{cube ref} (iii), and  the Lagrange multiplier vector    $ \lambda(x,w) = T(x)^{-1}(w_1\nabla f_1(x)+w_2\nabla f_2(x))$, for
$$
T(x)^{-1}=\begin{bmatrix}
    \frac{1}{2}& 0 &0&0&0\\x_2&1&0&0&0\\x_3+2x_2x_3&2x_3&1&0&0\\0&0&0&\frac{1}{4}&0\\\frac{1}{2}(1-4x_3x_5-8x_2x_3x_5)&-4x_3x_5&-2x_5&0&-1
\end{bmatrix}.
$$
By Algorithm \ref{alg1}, we get $f_{min}=   -2.1361$ and obtain that
\begin{small}
$$
w = (0.6667,0.3333),\,x=(   0.9349  ,  0.9979 ,  -0.9349 ,   1.0878 ,  -0.0000  , -0.3519,   -1.9999  ,  1.9999)
$$
\end{small}
at the relaxation order $k=3$. The computation takes around 167.03 seconds.

\end{exa}

\subsection{Examples using representation \reff{rep:wpp2} }
This subsection gives  some examples of applying Algorithm \ref{alg2}
to solve the OWP \reff{oop}. In these examples, the number of objective functions is relatively large, making the use of \reff{rep:wpp2} and Algorithm \ref{alg2} more efficient.

\begin{exa}
Consider the OWP with the preference function
\[
f_0(x) = -\sum\limits_{i=1}^8 x_i^4+x_1x_3^2-x_1
\]
and the objective function $F(x)=(f_1(x),\dots,f_6(x))$, where
\[
f_1(x)=h(x)+x_1+2x_2+5x_5,~ f_2(x) = h(x)+2x_2+x_3-x_4, ~ f_3(x) = h(x)+x_7+x_{8},
\]
\[
f_4(x)=h(x)+x_5+x_6+x_7-3x_8,~ f_5(x)=h(x)-3x_1-3x_4, ~ f_6(x)=h(x)+x_3-x_8,
\]
and
$$
h(x)= x_1^2+2x_1x_2+x_1x_3+2x_2^2+2x_3^2+x_5+x_6^2.
$$
The feasible set is given by
\[
K = \{x\in\r^8:\, 1-\|x\|^2\ge 0\}.
\]
Note that the objective functions only differ by the linear terms. The weight vector $w$ can be represented as in \reff{rep:w}.
By Algorithm \ref{alg2},  we get $f_{min} =  -1.0177$ and obtain that
\[
w=(0.0000,0.0000,0.0000,0.0000,1.0000,0.0000),
\]
\[
x = (  0.5677,
   -0.1434,
   -0.0717,
    0.7660,
   -0.2553,
   -0.0000,
   -0.0000,
    0.0000),
\]
at the relaxation order $k=2$.
The computation takes around 7.53 seconds.
\end{exa}

\begin{exa} (random instances)
 We consider the randomly generated unconstrained OWPs.
 Let $Q_0$ and $Q$ be positive definite matrices in $\r^{n\times n},$ and  $d_0,d_1,\dots,d_{n}$ be vectors in $\r^{n}$ such that all entries of them obey the standard normal distribution. The preference function is
\[
f_0(x)=\frac{1}{2}x^TQ_0x + d_0^Tx,
\]
and the objective function $F(x)=(f_1(x),\dots, f_n(x))$, where
\[
f_i(x)=\frac{1}{2}x^TQx+d_i^Tx
\]
 Since the matrix $D$ as in \reff{rep:w} is nonsingular for random $d_1,\dots,d_n$, we can use the representation of the weight vector $w(x)$ given in Proposition \ref{thm33}.

For $n = 5, 10, 20, 50$, and 100, we generate 100 random instances. Algorithm \ref{alg2} is applied to solve them. We also apply the standard Moment-SOS relaxations as in \reff{equ1:oop} to solve  the reformulation without eliminating $w$ as below:
\begin{equation*}
\left\{
\begin{array}{cl}
    \min\limits_{x,w\in\r^n} & \frac{1}{2}x^TQ_0x+d_0^Tx \\
    \text{s.t.} & \sum\limits_{i=1}^n w_i\nabla f_i(x) = 0,\\
    & e^Tw-1=0,\,1-\|w\|^2\ge0,\\
    & w:=(w_1,\dots,w_n)\ge 0.
\end{array}
\right.
\end{equation*}
Both methods successfully solve all instances, and the computation time comparison is reported in Table~\ref{Table:01}.
\begin{table}[htb]
    \centering
    \caption{Computation time (in seconds) for different  values of $n$}
\label{Table:01}

\begin{tabular}{|c|c|c|c|c|c|}  \hline
 $n$ & $5$ & $10$ & $20$ & $50$ & $100$ \\
 \hline
Algorithm \ref{alg2}& 0.04 & 0.07  & 0.10  & 0.81  & 170 \\
\hline
The standard relaxation & 0.11 & 0.17  & 0.33  & 5.53  & $\ge 600$\\
\hline
\end{tabular}
\end{table}

The table shows that Algorithm \ref{alg2} significantly speeds up
the standard Moment-SOS relaxations.
\end{exa}

\subsection{Examples using representation \reff{rep:wpp5} }
This subsection gives an example of applying Algorithm \ref{alg3}
to solve the OWP \reff{oop}.
In these examples, we are able to express the Lagrange multiplier vector $\lambda$ and the weight
vector $w$ in terms of $x$, making the use of \reff{rep:wpp5} and Algorithm \ref{alg3} more efficient.

\begin{exa}
Consider the OWP with the
preference function
\[
f_0(x) = x_1^2x_2+x_2^2x_3-3x_4x_5x_6+x_{10}^2
\]
and the objective functions
\[
f_1(x)=h(x)+3x_1+4x_2+x_5,\, f_2(x)=h(x)-2x_8 -x_9,
\]
\[
f_3(x)=h(x)+2x_{10}-3x_7,\,f_4(x)=h(x)+x_5-x_4-x_3,
\]
for the convex polynomial
\[
h(x)=x_1^2 + 2x_2^2+3x_3^2+\dots + 10x_{10}^2.
\]
The feasible set is given by
\[
    K=\{x\in\r^{10}: 1-2x_1^2-x_3^2+x_5+x_7\ge 0\}.
\]
By Algorithm \ref{alg3},  we get $f_{min} =  -0.4982$ and obtain that
\begin{small}
\[
x = (-0.7058,
   -1.0000,
    0.0000,
    0.0000,
   -0.0437,
   -0.0000,
    0.0402,
    0.0000,
    0.0000,
   -0.0000),
\]
\end{small}
which gives
\[
\lambda =    0.5626, \,w=(1.0000,0.0000,0.0000,0.0000),
\]
at the relaxation order 2. The computation takes around 4.05 seconds.
\end{exa}

\begin{exa}
Consider the OWP with preference function
\[
f_0(x)  =  x^Tx-4(x_1^2x_2^2+x_2^2x_3^2+x_3^2x_4^2+x_5^2x_1^2) 
\]
and the objective functions
\[
f_1(x)=x^Tx+4x_2+x_3+2x_4+3x_5+3x_6+4x_7+3x_{10},
\]
\[
f_2(x)=x^Tx + 3x_1+2x_2+x_3+x_4+2x_5+5x_6+4x_7+2x_8+2x_{10},
\]
\[
f_3(x)=x^Tx + 4x_1+x_2+2x_3+3x_4+x_5+5x_6+3x_7+5x_8+x_9+x_{10}.
\]
The feasible set is given by
\[
K = \{x\in\r^{10}:1-x_1^2-x_2^2-x_3^2-x_4-x_5-x_{10}\ge 0\}.
\]
By Algorithm \ref{alg3}, we get $f_{min}=-0.5221$ and obtain that
\begin{small}
\[
x = -(0.9899,
   1.2470,
   0.6942,
   1.1536,
   1.0263,
   2.0456,
   1.7903,
   1.1734,
   0.2091,
   1.0263),
\]
\end{small}
which gives
\[
\lambda = 0.0167,\, w= (0.4539,0.1277,0.4183)
\]
at the relaxation order 2. The computation takes around 5.68 seconds.
\end{exa}

\section{Applications in multi-task learning}
\label{section6}
Multi-task learning (MTL) is a machine learning approach that trains a model on multiple related tasks simultaneously, utilizing shared information across tasks to enhance performance on each one \cite{multitask survey}. We denote the set of inputs and labels as
$$
X = \{X^{(1)},\dots,X^{(p)}\},~Y = \{y^{(1)},\dots,y^{(q)}\},$$
respectively.
Let $U=\{U_1,\dots,U_n\}$ denote the set of trainable parameters. For $i=1,\dots,m$, where $m$ is the number of tasks, let $g_i(X^{(i)},U)$ and $f_i(g_i(X^{(i)},U),Y)$ denote the output and loss function for the $i$th task, respectively. Finding the parameter set $U$ can be done by solving the following MOP:

\begin{equation}
\label{mtlmin}
    \left\{\begin{array}{cl}
        \min & F(U):= (f_1(U),\dots,f_m(U)) \\
        \st & U\in K,
    \end{array}
    \right.
\end{equation}
where $K$ is the constraining set. In general, $K$ is the Euclidean space, the unit ball or the positive orthant. Generally, we want to solve for the OWP
\begin{equation}
    \label{owpmtl}
    \left\{
    \begin{array}{cl}
        \min& f_0(U) \\
        \st & U\in \mathcal{WP},
    \end{array}\right.
\end{equation}
where the preference function $f_0$ is often the additional criterion or desiderata and the set $\mathcal{WP}$ is the weakly Pareto set of the MOP \reff{mtlmin}.
Based on the number of inputs and number of outputs, the MTL is categorized into the following cases \cite{briefreview}:  the multi-input single-output (MISO), the single-input multi-output (SIMO), and the multi-input multi-output (MIMO). In the following, we use our algorithms to solve the MISO, MIMO.

\begin{exa}
We consider the MISO case, where multiple data sources are mapped to the same label $y$, and each task involves predicting the common label
$y$ based on a single data source.
We utilize the loss function outlined in \cite{briefreview}:
\[ f_i(X^{(i)},u,y)=\|X^{(i)}u-y\|^2,~i\in[p],
\]
where $X=\{X^{(1)},\dots,X^{(p)}\}$ denotes the set of data, $X^{(i)}\in\r^{n_1\times n_2}$ denotes the $i$th data source, $y\in \r^{n_1}$ is the common label and
$u \subseteq \r^{n_2}$ denotes the trainable parameters. We further assume that the parameters are in the nonnegative orthant.
Consider the case that
$p=5,\,n_1=n_2=10$, the preference function is
$$
f_0(u) = \|u\|_2^2,
$$
and $X^{(1)},\dots,X^{(5)}\in \r^{10\times 10}$ and $y\in \r^{10}$ are randomly generated, with all their entries following the standard normal distribution.
The feasible set is
\[
K = \{u\in\r^{10}:\,u\ge 0\}.
\]
Then, we have that $ C^{\prime}_1(u)=I_{10}$ and
\[
~ \lambda(u,w)= \sum\limits_{i=1}^5w_i\nabla f_i(u).
\]
By Algorithm \ref{alg1}, we get $f_{min}= 0.6451$ and obtain that
\[
w = ( 0.3771  ,  0.1320   , 0.1597  ,  0.2957, 0.0355),
\]
\[
u = (  0.3744,
    0.3390,
    0.0785,
    0.0000,
    0.0433,
    0.3766,
    0.0656,
    0.0001,
    0.3517,
    0.3349)
\]
at the relaxation order $k=2$. The computation takes around 13.03 seconds.

\end{exa}

\begin{exa}

We consider the MIMO case,  involving multiple data sources and targets within an autoencoder framework designed to compress and reconstruct the input data. It is important to note that, in an autoencoder process, the input sources and targets are identical. Each task aims to reconstruct the input data. The process is constructed as follows:
\begin{center}
\begin{tikzpicture}[scale=1]
    \draw[fill=gray!30] (-4,1) rectangle (-2,-1);
    \node at (-3, 0) {$X$};
    \node at (-3, -1.2) {Input Layer};

    \node at (-2.7, 1.8) {Encoder};

    \draw[fill=gray!10] (-1, 0.5) rectangle (1,-0.5);
    \node at (0, 0) {$ $};
    \node at (0, -1) {Code};

    \draw[fill=gray!50] (2,1) rectangle (4,-1);
    \node at (3, 0) {$\hat{X}$};
    \node at (3, -1.2) {Output Layer};

    \node at (2.7, 1.8) {Decoder};

    \draw[->, thick] (-2,0) -- (-1,0); 
    \draw[->, thick] (1,0) -- (2,0);   

    \draw[dashed] (-4.5,1.5) rectangle (-0.5,-1.5);
    \draw[dashed] (0.5,1.5) rectangle (4.5,-1.5);
\node at (0,-2){Figure: Illustration of Autoencoder Process};
\end{tikzpicture}
\end{center}

For inputs $X = \{X^{(1)},\dots,X^{(p)}\}$, where $X^{(i)}\in\r^n$ for $i\in[p]$, we denote the $j$th entry of $X^{(i)}$ as $X^{(i)}_j$. The encoder process consists of one ReLU layer defined by the elementwise operation
$$
\operatorname{ReLU}(x)=\max (0,x),
$$
and one linear layer defined by
$$
\operatorname{Linear}(x,A,b)=Ax+b,
$$
where $A\in\r^{n\times n}$ and $b\in\r^n$ satisfying $\|A\|_F^2,\|b\|^2\le 1$ are to be determined. The decoder process consists of one
Sigmoid layer defined by the elementwise operation \cite{chebyapproxi}
\[
\sigma(x)=\operatorname{tanh}(x)\approx x  -
\frac{x^3}{3}
+\frac{2x^5}{15}.
\]
Here, the output is the composition of the above operations, i.e.,
$$
g_i(X^{(i)},U)=\sigma(X^i)\circ \operatorname{Linear}(X^{(i)},U)\circ \operatorname{ReLU}(X^{(i)}),
$$
and  the  loss function is
$$
f_i(g_i(X^{(i)},U),X) = \|g_i(X^{(i)},U)-X^{(i)}\|^2,
$$
 where  $U=\{A,b\}$ is the trainable parameter set.
Let $\hat{X^i}=g_i(X^{(i)},U)$ represent the output of the autoencoder process, and define the preference function $f_0$ as
$$
f_0(U):=\sum^p_{i=1}\sum^n_{j=1} X_{j}^{(i)}-\hat{X}_j^{(i)}.
$$
The
feasible set is
\[
K = \{\,1-\|A\|_F^2\ge 0,\,1-\|b\|^2\ge 0\}.
\]
Consider the case that $n=4$ and $p=10$,
we have that
\[
K=\{(A,b)\in\r^{4\times 4}\times \r^4:\, 1-\|A\|_F^2\ge 0,\,1-\|b\|^2\ge 0\}.
\]
The matrix $C^{\prime}_1(U)$ is computed as
\[
C^{\prime}_1(U)=\begin{bmatrix}
    -\frac{1}{2}\operatorname{vec}(A)&0\\
    0&-\frac{1}{2}e^Tb
\end{bmatrix},
\]
where $\operatorname{vec}(A)$ represents the vectorized form of matrix $A$.
Using Algorithm \ref{alg1}, we solve for the parameters
$
A$ and
$b$ with 100 sets of randomly generated $X^1,\dots,X^{10}$, where $X^i_j\in [-1,1]$ and  obey the standard normal distribution. The average minimum value of
$f_0$
  is found to be 0.1280, with an average computation time of 107.02 seconds.
\end{exa}

\section{Conclusions and discussions}\label{sec:dis}

In this paper, we propose three algorithms to solve polynomial optimization problem
over the weakly Pareto set of the convex MOP, based on different reformulations of the weakly Pareto set. Numerical experiments
are conducted to show the efficiency of these methods. We also apply our algorithms to solve the multi-input single-output problem and the multi-input multi-output problem. There are many interesting questions for future research. For instance, when the MOP \reff{mop} is nonconvex, it is typically hard to give an efficient characterization for the weakly Pareto set. How to efficiently approximate this set is an important question; see \cite{vmag}.

\section*{Acknowledgements}
This work is partially supported by the NSF grant DMS-2110780.

\end{document}